\documentclass[11pt,oneside,english,jou]{amsart}
\usepackage[T1]{fontenc}
\usepackage[latin9]{inputenc}
\usepackage{verbatim}
\usepackage{amsthm}
\usepackage{amssymb}

\makeatletter
\numberwithin{equation}{section}
\numberwithin{figure}{section}
\theoremstyle{plain}
\newtheorem{thm}{Theorem}[section]
  \theoremstyle{remark}
  \newtheorem{rem}[thm]{Remark}
  \theoremstyle{definition}
  \newtheorem{defn}[thm]{Definition}
  \theoremstyle{definition}
  \newtheorem{example}[thm]{Example}

  \theoremstyle{remark}
  
  \theoremstyle{definition}

  \theoremstyle{plain}
  \newtheorem{prop}[thm]{Proposition}
  \theoremstyle{definition}
  \newtheorem{problem}[thm]{Problem}
  \theoremstyle{plain}
  \newtheorem{lem}[thm]{Lemma}
  \theoremstyle{plain}
  \newtheorem{cor}[thm]{Corollary}

\newcommand{\eps}{\epsilon}

\newcommand{\N}{\mathbb{N}}

\linespread{1.6}

\makeatother

\usepackage{babel}

\begin{document}

\title[Dynamical Embedding in Cubical Shifts, TRP \& SBP]{Dynamical Embedding in Cubical Shifts \& the Topological Rokhlin and Small Boundary Properties}

\author{Yonatan Gutman}
\begin{abstract}
According to a conjecture of Lindenstrauss and Tsukamoto, a topological
dynamical system $(X,T)$ is embeddable in the $d$-cubical
shift $(([0,1]^{d})^{\mathbb{Z}},\ shift)$ if both its \textit{mean
dimension} and \textit{periodic dimension} are strictly bounded by
$\frac{d}{2}$. We verify the conjecture for the class of systems
admitting finite dimensional non-wandering sets (under the additional
assumption of closed periodic points set). The main tool in the proof
is the new concept of \textit{local markers}. Continuing the investigation
of (global) markers initiated in pervious work it is shown that the
\textit{marker property} is equivalent to a topological version of
the \textit{Rokhlin Lemma.} Moreover new classes of systems are found
to have the marker property, in particular, extensions of aperiodic
systems with a countable number of minimal subsystems. Extending work
of Lindenstrauss we show that for systems with the marker property
vanishing mean dimension is equivalent to the small boundary property.
Finally we answer a question by Downarowicz in the affirmative: the
small boundary property is equivalent to admitting a zero-dimensional
isomorphic extension.
\end{abstract}

\keywords{Mean dimension, periodic dimension, cubical shift, local marker property,
topological Rokhlin property, small boundary property. }
\subjclass[2010]{37A35, 54F45.}
\date{\today}
\thanks{The author was supported by the ERC Grant "Approximate Algebraic Structures and Applications".}
\maketitle

\section{Introduction}

The question under which conditions a topological dynamical system
$(X,T)$ is embeddable in the \textit{$d$-cubical shift} $(([0,1]^{d})^{\mathbb{Z}},\ shift)$
stems from Auslander's 1988 influential book. According to Jaworski's
Theorem (1974), for $X$ finite-dimensional and $T$ aperiodic, embedding
is possible with $d=1$. Auslander posed the question if for $d=1$, it is
sufficient that $X$ is minimal. The question was solved in the negative
by Lindenstrauss and Weiss (2000), adroitly using the invariant of
\textit{mean dimension} introduced by Gromov (1999). Around the same
time Lindenstruass (1999) showed that if $X$ is an extension of a
minimal system and $mdim(X,T)<\frac{d}{36}$, then $(X,T)$ is embeddable
in $(([0,1]^{d})^{\mathbb{Z}},\ shift)$. Recently Lindenstrauss and
Tsukamoto (2012) have introduced a unifying conjecture and several
cases of this conjecture have been verified. According to this conjecture
the only obstructions for embeddability are given by the invariants
of \textit{mean dimension} and \textit{periodic dimension,} the later
quantifying the natural obstruction due to the set of periodic points.
A precise statement of the conjecture is that $mdim(X,T)<\frac{d}{2}$
and $perdim(X,T)<\frac{d}{2}$ imply $(X,T)$ is embeddable in $(([0,1]^{d})^{\mathbb{Z}},\ shift)$.
In Gutman and Tsukamoto (2012) the conjecture was verified for extensions
of aperiodic subshifs and in Gutman (2012) the conjecture was verified
for finite-dimensional systems. In the same article it was shown that
for extensions of aperiodic finite-dimensional systems $mdim(X,T)<\frac{d}{16}$
implies $(X,T)$ is embeddable in $(([0,1]^{d+1})^{\mathbb{Z}},\ shift)$.

A keen observer will notice that all embedding results mentioned above,
involving infinite-dimensional systems, require the assumption of
aperiodicity. This is due to a common device used in the proofs: existence
of \textit{markers} (of all orders). Markers can be thought of as a
suitable generalization of the familiar markers of symbolic dynamics,
introduced by Krieger (1982), to the setting of arbitrary dynamical
systems. As a necessary condition for the existence of markers of
all orders is the aperiodicity of the system, one is confined to the
category of aperiodic systems.

In this article we resolve this difficulty by introducing the concept
of \textit{local markers.} It has the desired consequence of allowing
us to treat some infinite-dimensional systems admitting periodic points.
In particular we verify the Lindenstrauss-Tsukamoto Conjecture for
the class of systems whose non-wondering set is finite dimensional
(under the additional assumption of closed periodic points set) and discuss some examples. The result
is a consequence of a more general embedding theorem stating that
systems with the local marker property verifying $mdim(X,T)<\frac{d}{36}$
and $perdim(X,T)<\frac{d}{2}$ are embeddable in $(([0,1]^{d})^{\mathbb{Z}},\ shift)$.

Recognizing the importance of markers, both local and global, we
continue the investigation of markers as carried out in Gutman (2012)
which itself was a generalization of previous work by Bonatti and Crovisier (2004), and prove in particular that an aperiodic system with a countable number of minimal subsystems admits the marker property.

In Gutman (2011) the notion of the topological Rokhlin property was
introduced. This is a dynamical topological analogue of the Rokhlin
Lemma of measured dynamics. Here we show that the marker property
is equivalent to a strong version of the topological Rokhlin property.
Following closely Lindenstrauss (1999) this characterization results
with several fruitful applications: Systems with the marker property
admit a compatible metric with respect to which the \textit{metric
mean dimension} equals the (topological) mean dimension. Moreover
for such systems, vanishing mean dimension is equivalent to the small
boundary property and to being an inverse limit of finite
entropy systems. Using a different method we show that the small boundary
property is equivalent to admitting a zero-dimensional isomorphic
extension. This answers a question by Downarowicz in the affirmative.

\subsection*{Acknowledgements:}

I would like to thank Jérôme Buzzi, Sylvain Crovisier, Tomasz Downarowicz,
Misha Gromov, Mariusz Lema\'{n}czyk, Elon Lindenstrauss, Julien Melleray,
Micha\l{} Rams and Benjamin Weiss for helpful discussions. Special
thanks to Tomasz Downarowicz and the Wroc\l{}aw University of Technology
for hosting me for numerous times during the time I was working on
this article.

\section{Preliminaries}

The following article is closely related to the article \cite{Gut12a}
and we recommend the reader to familiarize herself or himself with
the Introduction and Preliminaries sections of that article.

\subsection{Conventions}

Throughout the article with the exception of Section \ref{sec:A-characterization-of SBP},
a topological dynamical system (t.d.s) $(X,T)$ consists of a \textit{metric}
compact space $(X,d)$ and a \textit{homeomorphism} $T:X\rightarrow X$.
$P=P(X,T)$ denotes the set of periodic points and $P_{n}$ denotes
the set of periodic points of period $\leq n$. In addition we use
the notation $H_{n}=P_{n}\setminus P_{n-1}$. $\triangle={\{(x,x)|\, x\in X\}}$
denotes the \emph{diagonal} of $X\times X$. If $x\in X$ and $\epsilon>0$,
let $B_{\epsilon}(x)=\{y\in X|\, d(y,x)<\epsilon\}$ denote the open
ball around $x$. We denote $\overline{B}_{\epsilon}(x)=\overline{B_{\epsilon}(x)}$.
Note $\overline{B}_{\epsilon}(x)\subseteq\{y\in X|\, d(y,x)\leq\epsilon\}$
but equality does not necessary hold. For $f,g\in (C(X,[0,1]^{d})$, we define $||f-g||_{\infty}\triangleq sup_{x\in X}||f(x)-g(x)||_{\infty}$.

\subsection{The Non-Wandering Set }

Let $(X,T)$ be a t.d.s. A point $x\in X$ is said to be \textbf{non-wandering
}if for every open set $x\in U$, there is $k\in\mathbb{Z}$ so that
$U\cap T^{k}U\neq\emptyset$. The \textbf{non-wandering set }$\Omega(X)$
is the collection of all non-wandering points. Note $\Omega(X)$ is a non-empty,
closed and $T$-invariant set.

The following three subsections follow closely the corresponding subsections
in \cite{Gut12a}:

\subsection{Dimension}

Let $\mathcal{C}$ denote the collection of open (finite) covers of
$X$. For $\alpha\in\mathcal{C}$ define its \textit{order} by $ord(\alpha)=\max_{x\in X}\sum_{U\in\alpha}1_{U}(x)-1$.
Let $D(\alpha)=\min_{\beta\succ\alpha}ord(\beta)$ (where $\beta$
\emph{refines} $\alpha$, $\beta\succ\alpha$, if for every $V\in\beta$,
there is $U\in\alpha$ so that $V\subset U$). The Lebesgue covering
dimension is defined by $dim(X)=\sup_{\alpha\in\mathcal{C}}D(\alpha)$.

\subsection{Periodic Dimension\label{subsec:perdim}}

Let $P_{m}$ denote the set of points of period $\leq m$. Introduce
the infinite vector $\overrightarrow{perdim}(X,T)=\big(\frac{dim(P_{m})}{m}\big)_{m\in\mathbb{N}}$.
This vector is clearly a topological dynamical invariant. Let $d>0$.
We write $perdim(X,T)<d$, if for every $m\in\mathbb{N}$, $\overrightarrow{perdim}(X,T)|_{m}<d$.

\subsection{Mean Dimension}

Define:

\[
mdim(X,T)=\sup_{\alpha\in\mathcal{C}}\lim_{n\rightarrow\infty}\frac{D(\alpha^{n})}{n}
\]
where $\alpha^{n}=\bigvee_{i=0}^{n-1}T^{-i}\alpha.$ Mean dimension
was introduced by Gromov \cite{G} and systematically investigated
by Lindenstrauss and Weiss in \cite{LW}.

\vskip 0.7 cm

The following two Subsections follow closely the corresponding Subsections
in \cite{G11}:
\subsection{The Topological Rokhlin Property\label{sub:Topological-Rokhlin-Property}}

\label{subsec:Rokhlin's Lemma} The classical Rokhlin lemma states
that given an aperiodic invertible measure-preserving system $(X,T,\mu)$
and given $\eps>0$ and $n\in\mathbb{N}$, one can find $A\subset X$
so that $A,TA,\ldots,T^{n-1}A$ are pairwise disjoint and $\mu\big(\bigcup_{k=0}^{n-1}T^{k}A\big)>1-\eps$.
It easily follows that given an aperiodic invertible measure-preserving
system $(X,T,\mu)$ and given $\eps>0$, one can find a measurable
function $f:X\rightarrow\{0,1,\ldots,n-1\}$ so that if we define
the \textit{exceptional set} $E_{f}=\{x\in X\,|\, f(Tx)\neq f(x)+1\}$,
then $\mu(E_f)<\eps$. The new formulation allows us to generalize to
the topological category. Indeed following \cite{SW}, given a t.d.s
$(X,T)$ and a set $E\subset X$, we define the orbit-capacity of
a set $E$ in the following manner (the limit exists):

\[
ocap(E)=\lim_{n\rightarrow\infty}\frac{1}{n}\sup_{x\in X}\sum_{k=0}^{n-1}1_{E}(T^{k}x)
\]
$(X,T)$ is said to have the \textbf{topological Rokhlin property
(TRP)} if and only if for every $\epsilon>0$ there exists a continuous
function $f:X\rightarrow\mathbb{R}$ so that for the \textit{exceptional
set} $E_{f}=\{x\in X\,|\, f(Tx)\neq f(x)+1\}$,
one has $ocap(E_{f})<\epsilon$.

\subsection{The Small Boundary Property\label{sub:The-Small-Boundary}}

\label{SBP section} Following \cite{SW} we call $E\subset X$ \textbf{small}
if $ocap(E)=0$. For closed sets this has a simple interpretation.
Indeed a closed set $A\subset X$ is small if and only if for any
$T$-invariant measure $\mu$ of $X$, one has $\mu(A)=0$. When $X$
has a basis of open sets with small boundaries, $(X,T)$ is said to
have the \textbf{small boundary property (SBP)}. In \cite{LW} it
was shown that SBP implies mean dimension zero. In \cite{G11} it
was shown that if $(X,T)$ is an extension of an aperiodic space with
SBP then it has TRP.

\subsection{The Metric Mean Dimension\label{sub:The-Metric-Mean}}

A set $S\subset X$ is called $(n,\epsilon,d)$-spanning if for every
$x\in X$ there is a $y\in S$ so that for all $0\leq k<n$, $d(T^{k}x,T^{k}y)<\epsilon$.
Define $A(n,\epsilon,d)$ to be the cardinality of a minimal $(n,\epsilon,d)$-
spanning set. Define:
\[
s(\epsilon,d)=\limsup_{n\rightarrow\infty}\frac{\log(A(n,\epsilon,d))}{n}
\]

\[
mdim_{d}(X,T)=\liminf_{\epsilon\rightarrow0}\frac{s(\epsilon,d)}{|\log(\epsilon)|}
\]

\noindent In \cite{LW} it was shown that $mdim_{d}(X,T)\leq mdim(X,T)$.
By a classical theorem of Bowen and Dinaburg, the topological entropy
is given by $h_{top}(X,T)~ =~\lim_{\epsilon\rightarrow0}s(\epsilon,d)$.
Thus it was concluded in \cite{LW} that finite topological entropy
implies mean dimension zero.

\subsection{Overview of the Article}

In Section \ref{sec:The-Marker-Property} the definition of the marker
property is recalled and new classes of system admitting the marker
property are exhibited, in particular, extensions of aperiodic systems
with a countable number of minimal subsystems. Additionally some simple examples are discussed. In Section \ref{sec:The-Local-Marker}
the local marker property is defined and verified for systems of finite dimensional systems
with closed sets of periodic points. In Section \ref{sec:The-Strong-Topological}
the local and global strong topological Rokhlin properties are introduced
and investigated. In particular it is shown that the marker property
is equivalent to the (global) strong topological Rokhlin property.
In Section \ref{sec:An-Embedding-Theorem} the following embedding
theorem is proven: If $(X,T)$ has the local marker property, $mdim(X,T)<\frac{d}{36}$
and $perdim(X,T)<\frac{d}{2}$, then $(X,T)$ is generically embeddable
in $(([0,1]^{d})^{\mathbb{Z}},\ shift)$. In Section \ref{sec:Applications}
various applications of the embedding theorem are given, in particular,
the verification of the Lindenstrauss-Tsukamoto Conjecture for the
class of systems admitting finite dimensional non-wandering sets (under
the additional assumption of closed periodic points set). Additionally some examples are constructed. In Section
\ref{sec:Equivalence-SBP-Mdim=00003D0} it is shown that systems
with the marker property admit a compatible metric with respect to
which the \textit{metric mean dimension} equals the (topological)
mean dimension. Moreover for systems with the marker property, vanishing
mean dimension is equivalent to the having the small boundary property
and to being an inverse limit of finite entropy systems. In Section
\ref{sec:A-characterization-of SBP} is is shown that the small boundary
property is equivalent to admitting a zero-dimensional isomorphic
extension. The Appendix contains auxiliary lemmas.

\section{The Marker Property\label{sec:The-Marker-Property} }
\begin{defn}
A subset $F$ of a t.d.s $(X,T)$ is called an \textbf{$n$-marker}
($n\in\mathbb{N}$) if:
\begin{enumerate}
\item $F\cap T^{i}(F)=\emptyset$ for $i=1,2,\ldots,n-1$.
\item The sets $\{T^{i}(F)\}_{i=1}^{m}$ cover $X$ for some $m\in\mathbb{N}$.
\end{enumerate}
The system $(X,T)$ is said to have the \textbf{marker property} if
there exist \textit{open} $n$-markers for all $n\in\mathbb{N}$.
\end{defn}

\begin{rem} \label{Rem: marker property stable under extension}Clearly
the marker property is stable under extension, i.e. if $(X,T)$ has
the marker property and $(Y,S)\rightarrow(X,T)$ is an extension,
then $(Y,S)$ has the marker property. \end{rem}

\begin{rem} By Lemma A.1 of \cite{Gut12a} $(X,T)$ has a closed
$n$-marker iff $(X,T)$ has an open $n$-marker. \end{rem}

The marker property was first defined in \cite{Dow06} (Definition
2), where one requires the $n$-markers to be clopen. In the same
article it was proven that an extension of an aperiodic zero-dimensional
(non necessarily invertible) t.d.s has the marker property. This was
essentially based on the \textquotedbl{}Krieger Marker Lemma\textquotedbl{}
(Lemma 2 of \cite{K82}). In \cite{Gut12a} Theorem 6.1 it was proven
that aperiodic finite dimensional t.d.s have the marker property.
From \cite[Lemma 3.3]{L99} it follows that an extension of an aperiodic
minimal system has the marker property. Given these results it is natural to ask the following
question:

\begin{problem} \label{Ques: Aperiodic-->Marker?}Does any aperiodic
system have the marker property? \end{problem}

We do not know the answer of the previous problem. However we are able to prove two theorems establishing the existence of the marker property under natural assumptions. We also discuss examples.
\begin{thm}
\label{thm:countable number of minimal systems-->Marker Property}(Downarowicz
\& Gutman) If $(X,T)$ is an extension of an aperiodic t.d.s which
has a countable number of minimal subsystems then it has the marker
property. \end{thm}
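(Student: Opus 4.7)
The plan is to reduce, via the extension-stability of the marker property (Remark \ref{Rem: marker property stable under extension}), to the case where $(X,T)$ itself is aperiodic with minimal subsystems enumerated as $\{M_{k}\}_{k\in\mathbb{N}}$, and then to patch together local markers around each $M_{k}$ into a single open $n$-marker by a countable induction. The local ingredient is \cite[Lemma 3.3]{L99} (the minimal-aperiodic case): for each aperiodic minimal $(M_{k},T)$ and any $n$, there is an open set $G_{k}\subseteq M_{k}$ with $G_{k}\cap T^{i}G_{k}=\emptyset$ for $i=1,\dots,n-1$ and $M_{k}=\bigcup_{i=0}^{m_{k}-1}T^{i}G_{k}$.

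Fix $n$. I would build an increasing sequence of open sets $F_{0}\subseteq F_{1}\subseteq\cdots$, each satisfying $F_{k}\cap T^{i}F_{k}=\emptyset$ for $i=1,\dots,n-1$, with the additional inductive hypothesis that $M_{j}\subseteq\bigcup_{i\in\mathbb{Z}}T^{i}F_{k}$ for every $j\le k$. At step $k$, since $\bigcup_{i\in\mathbb{Z}}T^{i}F_{k-1}$ is open and $T$-invariant, its intersection with $M_{k}$ is open and $T$-invariant in $M_{k}$, and a minimality dichotomy forces $M_{k}$ either to be already contained in it (then take $F_{k}=F_{k-1}$) or to be entirely disjoint from it. In the second case I would extend $G_{k}$ to an open neighborhood $\tilde{G}_{k}\subseteq X$ with $\tilde{G}_{k}\cap M_{k}=G_{k}$, shrinking it enough to preserve $\tilde{G}_{k}\cap T^{i}\tilde{G}_{k}=\emptyset$ for $i=1,\dots,n-1$ as well as $\tilde{G}_{k}\cap\bigcup_{0<|i|\le n-1}T^{i}F_{k-1}=\emptyset$, and set $F_{k}=F_{k-1}\cup\tilde{G}_{k}$. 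After the induction, letting $F=\bigcup_{k}F_{k}$, property (1) of the marker definition holds because every pair $F_{k}\cap T^{i}F_{l}$ sits inside $F_{\max(k,l)}\cap T^{i}F_{\max(k,l)}=\emptyset$, and for property (2) the open $T$-invariant set $\bigcup_{i\in\mathbb{Z}}T^{i}F$ must equal $X$ --- otherwise its closed $T$-invariant complement would contain some minimal $M_{k_{0}}$, which is covered by construction; a compactness argument (finite subcover, followed by shifting by a positive power of $T$) then yields the bound $m$ in condition (2).

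The hard part lies in the shrinking of $\tilde{G}_{k}$: the requirement $\tilde{G}_{k}\cap U_{k-1}=\emptyset$ with $U_{k-1}:=\bigcup_{0<|i|\le n-1}T^{i}F_{k-1}$ open forces $\tilde{G}_{k}\subseteq X\setminus\overline{U_{k-1}}$, so $M_{k}$ must avoid not just $U_{k-1}$ but its \emph{closure}. Naively, $M_{k}\cap U_{k-1}=\emptyset$ follows from the dichotomy, but boundary points of $F_{k-1}$ might still lie on $M_{k}$. To overcome this I would strengthen the inductive hypothesis to a closed-marker version --- passing back and forth between open and closed $n$-markers via \cite[Lemma A.1]{Gut12a} --- and at each stage refine $F_{k-1}$ to an open set whose closure still has $n$-disjoint iterates and still covers $M_{1},\dots,M_{k-1}$ while staying off $M_{k}$. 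This boundary bookkeeping is exactly where the countability of the minimal subsystems is used in an essential way: the diagonal argument over the enumeration $\{M_{k}\}_{k\in\mathbb{N}}$ is what fails in the general aperiodic setting and explains why Problem \ref{Ques: Aperiodic-->Marker?} remains open.
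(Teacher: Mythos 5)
Your overall architecture matches the paper's: reduce to the aperiodic case with a countable enumeration $\{M_{k}\}$ of minimal subsystems, build an increasing sequence of open sets inductively, and take the union. But there is a genuine gap in the execution, and the way the paper resolves exactly the difficulty you identify differs from your plan in two crucial ways.

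You have correctly located the obstruction: an open $\tilde{G}_{k}$ disjoint from the open set $U_{k-1}=\bigcup_{0<|i|\leq n-1}T^{i}F_{k-1}$ must in fact avoid $\overline{U_{k-1}}$, yet the minimality dichotomy only gives $M_{k}\cap U_{k-1}=\emptyset$, not $M_{k}\cap\overline{U_{k-1}}=\emptyset$. Your proposed fix --- refine $F_{k-1}$ to an open set whose closure misses $M_{k}$ --- destroys the monotonicity $F_{1}\subset F_{2}\subset\cdots$ that your concluding paragraph relies on when you write ``every pair $F_{k}\cap T^{i}F_{l}$ sits inside $F_{\max(k,l)}\cap T^{i}F_{\max(k,l)}=\emptyset$.'' Once you shrink $F_{k-1}$ at stage $k$ and then again at stage $k+1$, etc., there is no obvious candidate for the final set $F$, nor any guarantee that the cumulative shrinkings leave $M_{1},\ldots,M_{k-1}$ covered. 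This bookkeeping is not a routine matter; it is the heart of the problem.

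The paper sidesteps the issue entirely, using two ideas absent from your proposal. First, the inductive hypothesis is deliberately weakened: instead of demanding that each $M_{j}$ ($j\leq k$) be fully covered by iterates of $U_{k}$, the paper only asks that some iterate $T^{j(i)}M_{i}$ \emph{meets} $U_{k}$; since $U=\bigcup U_{k}$ is open and each $M_{i}$ is minimal and compact, nonempty intersection with a single iterate upgrades to full coverage $M_{i}\subset\bigcup_{l=0}^{m(i)}T^{l}U$ only at the very end. Consequently the new piece added at stage $k+1$ need not be a marker for $M_{k+1}$ (so you do not need \cite[Lemma 3.3]{L99} at all); a single small ball around one well-chosen point $m_{k+1}\in M_{k+1}$ suffices. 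Second, the paper carries a forward-looking boundary condition: for all $l\geq k+1$, $M_{l}\nsubseteq\bigcup_{i=-n}^{n}T^{i}\partial U_{k}$. Combined with the dichotomy (which gives $M_{k+1}\cap\bigcup_{i\in\mathbb{Z}}T^{i}U_{k}=\emptyset$ in the nontrivial case), this condition produces a point $m_{k+1}\in M_{k+1}\setminus\bigcup_{i=-n}^{n}T^{i}\overline{U}_{k}$, precisely resolving the closure obstruction with no shrinking. Maintaining the boundary condition when passing from $U_{k}$ to $U_{k+1}=U_{k}\cup B_{r_{k+1}}(m_{k+1})$ is where countability enters in an essential and nontrivial way: the sets $\{\bigcup_{i=-n}^{n}T^{i}\partial B_{r}(m_{k+1})\}_{r>0}$ are pairwise disjoint, so for each of the countably many future $M_{l}$ at most one radius $r$ is excluded, leaving uncountably many admissible radii. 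This is different from --- and sharper than --- the diagonal argument you gesture at. Without this radius-selection step your induction cannot propagate, and without the weakened hypothesis (single-point meeting rather than full coverage) you are forced into the shrinking that breaks monotonicity. These are exactly the two ingredients your proposal is missing.
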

\begin{proof}
We may assume w.l.o.g that $(X,T)$ is aperiodic and has a countable
number of minimal subsystems. Let $n\in\mathbb{N}$. We will construct
inductively an open set $U\subset X$ so that the sets $\{T^{i}(U)\}_{i=1}^{n}$
are pairwise disjoint and $\{T^{i}(U)\}_{i=1}^{m}$ cover $X$ for
some $m$. Let $M_{1},M_{2},\ldots$ be an enumeration of the minimal
subsystems of $X$. Using the fact there is only a countable number
of minimal subsystems find $m_{1}\in M_{1}$, $r_{1}>0$ so that $\{T^{i}B_{r_{1}}(m_{1})\}_{i=-n}^{n}$
are pairwise disjoint and for all $l\geq2$, $M_{l}\nsubseteq\bigcup_{i=-n}^{n}T^{i}\partial B_{r_{1}}(m_{1})$
(here we use that $\{\bigcup_{i=-n}^{n}T^{i}\partial B_{r}(m_{1})\}_{r>0}$
is a uncountable collection of pairwise disjoint sets). Define $U_{1}=B_{r_{1}}(m_{1})$.
Assume one has defined an open set $U_{k}\subset X$ so that:
\begin{enumerate}
\item \label{enu:intersection with first subsystems}For any $i=1,\ldots,k$
there exists $j=j(i)\in\mathbb{Z}$ so that $U_{k}\cap T^{j}M_{i}\neq\emptyset$.
\item \label{enu:boundary condition}For all $l\geq k+1$, $M_{l}\nsubseteq\bigcup_{i=-n}^{n}T^{i}\partial U_{k}$.
\item \label{enu:-pairwise disjoint}$\{T^{i}(U_{k})\}_{i=1}^{n}$ are pairwise
disjoint
\end{enumerate}

\noindent If $U_{k}\cap T^{j}M_{k+1}\neq\emptyset$ for some $j\in\mathbb{Z}$,
define $U_{k+1}=U_{k}$. We now assume that $U_{k}\cap T^{j}M_{k+1}=\emptyset$
for all $j\in\mathbb{Z}$. By assumption $M_{k+1}\nsubseteq\bigcup_{i=-n}^{n}T^{i}\partial U_{k}$.
Conclude $M_{k+1}\nsubseteq\bigcup_{i=-n}^{n}T^{i}\overline{U}_{k}$.
Using the fact there is only a countable number of minimal subsystems,
we can find $m_{k+1}\in M_{k+1}$ and $r_{k+1}>0$ so that $\{T^{i}B_{r_{k+1}}(m_{k+1})\}_{i=-n}^{n}$
are pairwise disjoint and so that it holds:

\begin{equation}
B_{r_{k+1}}(m_{k+1})\cap\bigcup_{i=-n}^{n}T^{i}U_{k}=\emptyset,\label{eq:pairwise_disjoint}
\end{equation}

\begin{equation}
\forall l>k+1\quad M_{l}\setminus\bigcup_{i=-n}^{n}T^{i}\partial U_{k}\nsubseteq\bigcup_{i=-n}^{n}T^{i}\partial B_{r_{1}}(m_{1})\label{eq:k+1_Boundary_Condition}
\end{equation}

\noindent Define $U_{k+1}=U_{k}\cup B_{r_{k+1}}(m_{k+1})$. We now verify that
the desired properties hold:
\begin{enumerate}
\item For any $i=1,\ldots,k+1$ there exists $j=j(i)\in\mathbb{Z}$ so that
$U_{k+1}\cap T^{j}M_{i}\neq\emptyset$. Indeed if $i\leq k$, this
follows from property (\ref{enu:intersection with first subsystems})
above. For $i=k+1$ it is trivial.
\item For all $l\geq k+2$, $M_{l}\nsubseteq\bigcup_{i=-n}^{n}T^{i}\partial U_{k+1}$.
Indeed if follows from $\partial U_{k+1}\subset\partial U_{k}\cup\partial B_{r_{k+1}}(m_{k+1})$
and (\ref{eq:k+1_Boundary_Condition}).
\item $\{T^{i}(U_{k+1})\}_{i=1}^{n}$ are pairwise disjoint. Indeed it is
enough to show $T^{i_{1}}U_{k}\cap T^{i_{2}}B_{r_{k+1}}(m_{k+1})=\emptyset$
for all $1\leq i_{1},i_{2}\leq n$. This follows from (\ref{eq:pairwise_disjoint}).
\end{enumerate}

\noindent Finally we define $U=\bigcup_{k=1}^{\infty}U_{k}$. As $U_{1}\subset U_{2}\subset\cdots$,
it holds that $\{T^{i}(U)\}_{i=1}^{n}$ are pairwise disjoint. Clearly
for any $i\in\mathbb{N}$ there exists $j=j(i)\in\mathbb{Z}$ so that
$U\cap T^{j}M_{i}\neq\emptyset$. As $U$ is open and $M_{i}$ is
compact this implies there exists $m(i)\in\mathbb{N}$ so that $M_{i}\subset\bigcup_{l=0}^{m(i)}T^{l}U$.
Fix $x\in X$. There exists $i\in\mathbb{N}$ so that $\overline{orb(x)}\cap M_{i}\neq\emptyset$.
Conclude there exists $k\in\mathbb{Z}$, so that $T^{k}x\in\bigcup_{l=0}^{m(i)}T^{l}U$.
By a simple compactness argument we deduce the existence of $m\in\mathbb{N}$
so that$\{T^{i}(U)\}_{i=1}^{m}$ cover $X$.
\end{proof}

\begin{example}
Clearly the previous theorem applies to every t.d.s which consists
of a finite union of minimal systems. We now present a simple example
of an aperiodic t.d.s with an infinite countable number of minimal
systems. Let $C_{r}=\{(x,y)|\, x^{2}+y^{2}=r\}\subset\mathbb{R}^{2}$,
be a circle of radius $r$ around the origin. Select a strictly decreasing
sequence of positive numbers $r_{1}>r_{2}>\cdots$ with $r_{i}\rightarrow r_{0}>0$.
Let $X=\bigcup_{i=0}^{\infty}C_{r_{i}}\subset\mathbb{R}^{2}$ and
define $T:X\rightarrow X$ by rotating by $\alpha$ on each circle
where $\alpha$ is some irraitional number.
\end{example}

\begin{example}
\label{Ex:rotation times identity }Not every aperiodic system has
a countbale number of minimal subsystems. Indeed consider $X=\mathbb{T}^{2}$,
the two-dimensional torus equipped with $T(x,y)=(x+\alpha,y)$ for some $\alpha$
irraitional number.
\end{example}

\begin{defn}
Let $(X,T)$ be a t.d.s and denote by $\mathcal{M}$ the collection
of all minimal subspaces of $(X,T)$. $(X,T)$ has a \textbf{compact
minimal subsystems selector }if there exists a compact $L$ so that
for every $M\in\mathcal{M}$, $|L\cap M|=1$ and $L\subset\bigcup\mathcal{M}$. \end{defn}
\begin{thm}
\label{thm:compact minimal selector--->Marker Property}(Downarowicz)
If $(X,T)$ is an extension of an aperiodic t.d.s with a compact
minimal subsystems selector than it has the marker property.\end{thm}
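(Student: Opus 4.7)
By Remark 3.2 the marker property is stable under extension, so the plan is to reduce immediately to the case where $(X,T)$ itself is aperiodic and carries a compact minimal subsystems selector $L$. Fixing $n\in\N$, I would then construct a single open $n$-marker $U$ in essentially one shot, using $L$ as a ``global transversal'' for the collection of minimal subsystems, rather than building $U$ inductively as in the proof of Theorem~3.4.

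The first key observation I would establish is that $L\cap T^{k}L=\emptyset$ for every $1\le k\le n-1$. Indeed, if $\ell_{1},\ell_{2}\in L$ satisfy $T^{k}\ell_{1}=\ell_{2}$, then $\ell_{1}$ and $\ell_{2}$ lie in the same minimal subsystem (since every minimal is $T$-invariant), so the selector property forces $\ell_{1}=\ell_{2}$, contradicting aperiodicity. Since $L$ and each $T^{k}L$ are compact and disjoint, a standard uniform continuity argument using compactness of $L$ produces $\delta>0$ so that the open $\delta$-neighborhood $U\teq\{x\in X:d(x,L)<\delta\}$ satisfies $U\cap T^{k}U=\emptyset$ for $1\le k\le n-1$, i.e.\ condition (1) of the definition of an $n$-marker.

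For the covering condition (2), I would argue as follows. For any $x\in X$ the orbit-closure $\overline{\mathrm{orb}(x)}$ is a nonempty closed $T$-invariant set, hence contains some minimal subsystem $M\in\mcal$. The selector gives a (unique) point $\ell\in L\cap M\subset U\cap\overline{\mathrm{orb}(x)}$, and since $U$ is open and $\mathrm{orb}(x)$ is dense in its closure, some iterate $T^{i}x$ lies in $U$. Thus $X=\bigcup_{i\in\Z}T^{i}U$, and a compactness argument selects finitely many integers $i_{1}<\cdots<i_{N}$ with $X=\bigcup_{k=1}^{N}T^{i_{k}}U$. Replacing $U$ by the translate $U'\teq T^{i_{1}-1}U$ preserves property (1) and yields $X=\bigcup_{i=1}^{m}T^{i}U'$ with $m=i_{N}-i_{1}+1$, producing the required open $n$-marker.

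The only step where care is needed is the first one: verifying $L\cap T^{k}L=\emptyset$ is short, but it is genuinely where both the selector property (to reduce to a single minimal) and aperiodicity (to exclude $\ell=T^{k}\ell$) are used. Everything else is routine uniform continuity and compactness, so this is where I would expect the reader to focus, and where Theorem~3.4's inductive construction (needed because countability of $\mcal$ gave no compactness) is replaced by a clean one-step construction made available by the compact selector.
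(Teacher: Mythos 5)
Your proof is correct and takes essentially the same approach as the paper: both establish disjointness of the iterates $T^{k}L$ from the selector property plus aperiodicity, pass to an open $\delta$-neighborhood of $L$ via compactness, and then cover $X$ using the fact that every orbit closure contains a minimal subsystem and hence meets $L$. Your covering step is marginally more streamlined — you go directly from $L\cap M\subset U\cap\overline{\mathrm{orb}(x)}$ to $T^{i}x\in U$, whereas the paper first covers each $M$ by forward iterates of $B_{\epsilon}(L)$ via minimality — but the argument and all essential ingredients are the same.
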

\begin{proof}
We may assume w.l.o.g that $(X,T)$ has a compact minimal subsystems
selector $L$. Denote by $\mathcal{M}$ the collection of all minimal
subspaces of $(X,T)$. If $x,y\in L$, $x\neq y,$ then there exists
distinct $M_{x},M_{y}\in\mathcal{M}$ so that $x\in M_{x}$ and $y\in M_{y}.$
This implies $orb(x)\cap orb(y)=\emptyset$. As $(X,T$) is aperiodic
conclude the closed sets $\{T^{i}L\}_{i=-\infty}^{\infty}$ are pairwise
disjoint. Let $n\in\mathbb{N}$. There exists $\epsilon>0$ so that
$T^{i}B_{\epsilon}(L)$ $(1\leq i\leq n)$ are pairwise disjoint.
For every $M\in\mathcal{M}$ there exists by minimality $m=m(M)$
so that:

\[
M\subset\bigcup_{i=0}^{m}T^{i}B_{\epsilon}(L)
\]
\noindent For every $z\in X$, there exists $M_{z}\in\mathcal{M}$
so that $\overline{orb(z)}\cap M_{z}\neq\emptyset$. We conclude by
a compacness argument that $B_{\epsilon}(L)$, $T^{1}B_{\epsilon}(L)$, $T^{2}B_{\epsilon}(L),\ldots$,
eventually cover $X$.
\end{proof}

\begin{example}
A simple example of an aperiodic system with a compact minimal subsystems
selector is given by Example \ref{Ex:rotation times identity }. A
selector is given by ${\{0\}\times\mathbb{T}}$. An aperiodic system
with a compact minimal subsystems selector without a non trivial minimal
factor is given by taking a disjoint union of the previous exmaple
$X$ with a circle, $Y=X\stackrel{\circ}{\cup}\mathbb{T}$ where the
circle is equipped with a rotation by $\beta$, such that $\alpha$
and $\beta$ are incommensurable.
\end{example}

\begin{example}
Not all aperiodic t.d.s have a compact minimal subsystems selector.
Indeed let $X=\mathbb{T}^{2}$ be the two-dimensional torus and $T:X\rightarrow X$
be given by $T(x,y)=(x+\frac{1}{2},x+\alpha)$ for some $\alpha$
irrational. Let $\mathcal{M}$ the collection of all minimal subspaces
of $(X,T)$. Note $\mathcal{M}=\{\{t,t+\frac{1}{2}\}\times\mathbb{T}|\, t\in[0,\frac{1}{2})\}$.
Assume for a contradiction $(X,T)$ has a compact minimal selector
$L$. Let $L_{2}$ be the projection of $L$ on the first coordinate.
$L_{2}$ is a closed set so that $\mathbb{T}=L_{2}\stackrel{\circ}{\cup}(L_{2}+\frac{1}{2})$.
Contradiction.
\end{example}

\section{The Local Marker Property\label{sec:The-Local-Marker}}
\begin{defn}
\label{local marker property} Let $Z,W$ be closed sets with $Z\times W\subset(X\times X)\setminus(\triangle\cup(X\times P)\cup(P\times X))$. A subset $F$ of a t.d.s $(X,T)$ is called
a \textbf{local $n$-marker} ($n\in\mathbb{N}$) for $Z\times W$
if:\end{defn}
\begin{enumerate}
\item \label{enu:disjointness-1}$F\cap T^{i}(F)=\emptyset$ for $i=1,2,\ldots,n-1$.
\item \label{enu:covering-1}The sets $\{T^{i}(F)\}_{i=1}^{m}$, $i=0,1,\ldots, m-1$,
cover $Z\cup W$ for some $m$.
\end{enumerate}
We say $Z\times W$ has \textbf{local markers} if it has \textit{open}
$n$-markers for all $n\in\mathbb{N}$. We say that a cover of $Y\subset(X\times X)\setminus(\triangle\cup(X\times P)\cup(P\times X))$
by a countable collection of products of closed sets ${\{Z_{i}\times W_{i}\}_{i=1}^{\infty}}$
has the \textbf{local marker property relatively to $Y$} if for every
$i$, $Z_{i}\times W_{i}$ has local markers. We say $(X,T)$ has
the \textbf{local marker property} if there is a cover with the local
marker property relatively to $(X\times X)\setminus(\triangle\cup(X\times P)\cup(P\times X))$.
\begin{rem}
\label{Rem: Implications of local marker property}If $(X,T)$ has
the marker property than it has the local marker property.
\end{rem}

\begin{rem}
Just as in the case of the marker property, $Z\times W$ has local
markers iff it has closed $n$-markers for all $n\in\mathbb{N}$.

\end{rem}
\begin{thm}
\label{thm:finite dimensional and P closed implies local marker property}If
$\dim(X)<\infty$ and $P$ is closed, then $(X,T)$ has the local
marker property. \end{thm}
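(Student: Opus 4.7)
The plan is to cover $(X\times X)\setminus(\triangle \cup (X\times P)\cup(P\times X))$ by a countable family of products of disjoint closed sets sitting inside $X\setminus P$, and then prove an auxiliary local marker lemma for compact aperiodic subsets of $X$ by localizing the construction of \cite[Theorem~6.1]{Gut12a}. Since $P$ is closed, $X\setminus P$ is open and equals $\bigcup_{k\geq 1} C_k$, where $C_k := \{x\in X : d(x,P)\geq 1/k\}$ is compact. Picking a countable dense set $D\subset X\setminus P$, I would form the family of pairs
\[
(Z,W) = \big(\overline{B_r(p)},\,\overline{B_r(q)}\big),\qquad p,q\in D,\ p\neq q,\ r\in\mathbb{Q},\ r>0,
\]
restricted to those for which $Z\cup W\subset C_k$ for some $k$ and $Z\cap W=\emptyset$. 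This is a countable collection of pairs of disjoint compact sets inside $X\setminus P$, and a routine density argument shows the corresponding products cover the complement set above.

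Given this cover, verifying the local marker property reduces to the following lemma: for every compact $K\subset X\setminus P$ and every $n\in\mathbb{N}$ there exists an open $F\subset X$ with $F\cap T^jF=\emptyset$ for $1\leq j\leq n-1$ and $K\subset\bigcup_{j=0}^{m-1}T^jF$ for some $m$ (apply it to $K=Z_i\cup W_i$ to produce the required $n$-marker). To prove the lemma I would localize \cite[Theorem~6.1]{Gut12a}. Since $K$ and $P$ are disjoint closed sets with $K$ compact, choose $\eta>0$ so that $K_\eta := \{x\in X : d(x,K)\leq\eta\}\subset X\setminus P$. The compact set $\widetilde K := \bigcup_{j=-n}^{n}T^j K_\eta$ still lies in $X\setminus P$ by $T$-invariance of $X\setminus P$, is finite-dimensional, and satisfies $\delta := \inf_{x\in\widetilde K}\inf_{1\leq|j|\leq n-1}d(T^jx,x)>0$ by compactness and aperiodicity on $\widetilde K$. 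Feeding a finite open cover of $\widetilde K$ of diameter $\ll\delta$ and order at most $\dim X$ into the combinatorial marker construction of \cite[Theorem~6.1]{Gut12a} should then produce the desired $F\subset X$.

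The main obstacle is this last step: checking that the argument of \cite[Theorem~6.1]{Gut12a}, originally stated for aperiodic finite-dimensional systems, has a faithful local variant. The key point is that the combinatorial selection used there invokes aperiodicity only through a positive lower bound on $d(T^jx,x)$ for $1\leq|j|\leq n-1$ at the points to be marked, and it invokes finite-dimensionality only through the existence of small-diameter open covers of bounded order. Both ingredients remain available when the construction is confined to the compact neighborhood $\widetilde K\subset X\setminus P$, so the selection procedure transfers and delivers an open $n$-marker for $K$. Running the lemma on each pair in the countable cover then yields the local marker property for $(X,T)$.
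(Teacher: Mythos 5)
Your proposal follows essentially the same approach as the paper: cover $(X\times X)\setminus(\triangle\cup(X\times P)\cup(P\times X))$ by a countable family of products of disjoint closed sets inside $X\setminus P$ (this is where $P$ being closed enters), then for each pair produce local markers by running the combinatorial tower construction of Theorem~6.1 of \cite{Gut12a} restricted to a compact subset of $X\setminus P$. The one quantitative adjustment you would need is that the paper's localization requires the covering sets $U_x$ to satisfy $\overline{U}_x\cap T^i\overline{U}_x=\emptyset$ over the longer window $i=1,\ldots,(2\dim X+2)n-1$ (and invokes Lemma~6.2 of \cite{Gut12a}, again using that $P$ is closed), so your thickened compact $\widetilde K$ and the separation bound $\delta$ should be defined using iterates on that scale rather than $\pm n$; this is harmless since $X\setminus P$ is $T$-invariant.
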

\begin{proof}
The proof follows closely the proof of Theorem 6.1 of \cite{Gut12a}
where it is shown that an aperiodic finite dimensional t.d.s has the
marker property. Theorem 6.1 of \cite{Gut12a} is based on a certain generalization of Lemma 3.7 of \cite{BC04} which is one of the building blocks in the proof of the Bonatti-Crovisier Tower Theorem for $C^1$-diffeomorphisms on manifolds \cite[Theorem 3.1]{BC04}. Let ${\{Z_{i}\times V_{i}\}_{i=1}^{\infty}}$ be
an arbitrary countable cover of $(X\times X)\setminus(\triangle\cup(X\times P)\cup(P\times X))$
by a countable collection of products of closed sets so that for every
$i$, $Z_{i}\cap P=\emptyset$ and $V_{i}\cap P=\emptyset$ (here
we use that $P$ is closed) and $Z_{i}\cap V_{i}=\emptyset$. Fix
$n,k\in\mathbb{N}$. For every $x\in Z_{k}\cup V_{k}$ choose an open
set $U_{x}$ so that $x\in U_{x}$, $\overline{U}_{x}\subset X\setminus P$
and $\overline{U}_{x}\cap T^{i}\overline{U}_{x}=\emptyset$ for $i=1,2,\ldots,m=(2dim(X)+2)n-1$.
Let $U_{x_{1}},U_{x_{2}},\ldots,U_{x_{s}}$ be a finite cover of $Z_{k}\cup V_{k}$.
We now continue exactly as in the proof of Theorem 6.1 of \cite{Gut12a},
to find a $W$, so that $\overline{W}\cap T^{i}\overline{W}=\emptyset,\, i=1,2,\ldots,n-1$
and $Z_{k}\cup V_{k}\subset\bigcup_{i=1}^{s}\overline{U}_{x_{i}}\subset\bigcup_{i=0}^{m}T^{i}(\overline{W})$
(we use the fact that $P$ is closed in order to invoke Lemma 6.2
of \cite{Gut12a}). The existence of an open $n$-marker for $Z_{k}\cup V_{k}$
follows easily.\end{proof}
\begin{prop}
\label{prop:If Omega has local marker property then X has the local marker property}If
$(\Omega(X),T)$ has the local marker property then $(X,T)$ has the
local marker property. \end{prop}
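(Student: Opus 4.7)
My plan is to transfer the local marker cover of $(\Omega(X),T)$ to one of $(X,T)$ by extending each $\Omega$-marker across $X$ and by adjoining a countable family of products built from open wandering neighborhoods in $W:=X\setminus\Omega(X)$.

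Fix a cover $\{Z_{i}\times W_{i}\}_{i\geq 1}$ of $(\Omega\times\Omega)\setminus(\triangle\cup\Omega\times P\cup P\times\Omega)$ witnessing the local marker property for $\Omega(X)$, and, by the Remark, for each $i,n$ let $F^{(i,n)}\subset\Omega$ be a closed $n$-marker for $Z_{i}\times W_{i}$ in $\Omega$. Since $F^{(i,n)},TF^{(i,n)},\ldots,T^{n-1}F^{(i,n)}$ are pairwise disjoint compact subsets of $X$, a sufficiently small open $\delta$-neighborhood $\tilde F^{(i,n)}\subset X$ remains an open $n$-marker for $Z_{i}\times W_{i}$ viewed in $X\times X$. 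Because every point of the open set $W$ admits an open wandering neighborhood, Lindel\"of yields a countable basis $\{V_{k}\}_{k\geq 1}$ of $W$ such that each closure $\overline{V_{k}}$ lies inside an open wandering set $U_{k}$ with $\overline{U_{k}}\subset W$; in particular the iterates $\{T^{i}\overline{U_{k}}\}_{i\in\mathbb{Z}}$ are pairwise disjoint compact subsets of $W$.

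I would then define the cover of $A:=(X\times X)\setminus(\triangle\cup X\times P\cup P\times X)$ to consist of (i) the inherited $Z_{i}\times W_{i}$; (ii) the mixed products $\overline{V_{k}}\times Z_{i}$ and $Z_{i}\times\overline{V_{k}}$ for all $k,i$; and (iii) the wandering products $\overline{V_{k}}\times\overline{V_{\ell}}$ whenever $\overline{V_{k}}\cap\overline{V_{\ell}}=\emptyset$. That these sets cover $A$ relies on the fact that every $y\in\Omega\setminus P$ lies in some $Z_{i}$ (pair $y$ with any $y'\in(\Omega\setminus P)\setminus\{y\}$). The open $n$-markers are assembled accordingly: use $\tilde F^{(i,n)}$ for (i); use $F_{n}:=\tilde F^{(i,n)}\cup U_{k}$ for (ii), shrinking $\tilde F^{(i,n)}$ so that it avoids the compacts $T^{j}\overline{U_{k}}$ for $|j|\leq n-1$ (possible because these lie in $W$ at positive distance from $\Omega\supset F^{(i,n)}$); use $F_{n}:=U_{k}\cup T^{-K_{n}}U_{\ell}$ for (iii), with $K_{n}$ a sufficiently large integer.

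The main technical input is a bounded-interaction lemma: for compact $K_{1},K_{2}\subset W$, the set $\{\ell\in\mathbb{Z}:K_{1}\cap T^{\ell}K_{2}\neq\emptyset\}$ is finite. This follows by a compactness-and-contradiction argument: choose an open $V\supset\omega(K_{2})\cup\alpha(K_{2})$ with $V\cap K_{1}=\emptyset$ (using $\omega(K_{2})\cup\alpha(K_{2})\subset\Omega$ and $K_{1}\cap\Omega=\emptyset$); were $\{|\ell|:T^{\ell}K_{2}\not\subset V\}$ unbounded, subsequential limits of witnesses would produce a point of $\omega(K_{2})\cup\alpha(K_{2})$ outside $V$, a contradiction. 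With this lemma in hand, choosing $K_{n}$ in (iii) larger than both $n$ and the interaction bound $N(\overline{U_{k}},\overline{U_{\ell}})$ makes the four cross-terms of $F_{n}\cap T^{j}F_{n}$ empty for $j=1,\ldots,n-1$ (the two wandering terms vanish by the choice of $U_{k},U_{\ell}$, and the two mixed terms vanish once $K_{n}$ exceeds the interaction bound), while $T^{0}F_{n}\supset\overline{V_{k}}$ and $T^{K_{n}}F_{n}\supset\overline{V_{\ell}}$, delivering the required local $n$-marker. Establishing this bounded-interaction lemma is the only nontrivial step; everything else is bookkeeping.
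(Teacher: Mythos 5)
Your decomposition of $(X\times X)\setminus(\triangle\cup (X\times P)\cup(P\times X))$ into the pieces (i) $\Omega(X)\times\Omega(X)$, (ii) mixed, and (iii) $(X\setminus\Omega(X))\times(X\setminus\Omega(X))$ is the same as the paper's, and your treatment of (i) and (ii) --- inflating an $\Omega$-marker to a neighborhood in $X$, and adjoining a wandering neighborhood in the mixed case --- is essentially the paper's argument. The gap is entirely in (iii).

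Your ``bounded-interaction lemma'' is false: for compact $K_{1},K_{2}\subset X\setminus\Omega(X)$ the set $\{\ell\in\mathbb{Z}:K_{1}\cap T^{\ell}K_{2}\neq\emptyset\}$ can be infinite, even when $K_{1},K_{2}$ are disjoint and individually wandering. The step in your sketch that fails is the containment $\omega(K_{2})\cup\alpha(K_{2})\subset\Omega(X)$: the $\omega$-limit of a single \emph{point} is always non-wandering, but your ``subsequential limit of witnesses'' produces a point of the $\omega$-limit of the \emph{set}, $\omega(K)=\bigcap_{N}\overline{\bigcup_{n\geq N}T^{n}K}$, and that set need not lie in $\Omega(X)$. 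Concretely, let $X\subset\{0,1\}^{\mathbb{Z}}$ be the (closed, shift-invariant) set of sequences containing at most one maximal block of $1$'s. Then $\Omega(X)=\{0^{\infty},1^{\infty}\}$. Take the clopen wandering sets $K_{1}=\{x:x_{-1}=0,\ x_{0}=1,\ x_{1}=1\}$ (a block of $1$'s begins at coordinate $0$ and has length $\geq 2$) and $K_{2}=\{x:x_{0}=1,\ x_{1}=0\}$ (a block of $1$'s ends at coordinate $0$). They are disjoint compact subsets of $X\setminus\Omega(X)$, yet $K_{2}\cap T^{j}K_{1}\neq\emptyset$ for \emph{every} $j\geq 1$ (place the block $1^{j+1}$ on $[-j,0]$). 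Equivalently, $\omega(K_{1})$ contains the \emph{wandering} point $1^{\infty}0^{\infty}$, so your choice of $V$ is impossible. With the interaction set unbounded, no choice of $K_{n}$ makes $F_{n}=U_{k}\cup T^{-K_{n}}U_{\ell}$ an $n$-marker, since for large $K_{n}$ one of the cross-terms $U_{k}\cap T^{j-K_{n}}U_{\ell}$, $T^{-K_{n}}U_{\ell}\cap T^{j}U_{k}$ stays nonempty for some $1\leq j\leq n-1$. (In this two-symbol example one can salvage things by interchanging $U_{k}$ and $U_{\ell}$, but the analogous three-symbol subshift allowing one block of $1$'s and one of $2$'s has interaction set $\mathbb{Z}\setminus\{0\}$ and defeats both orderings.)

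The paper avoids any interaction bound by a further countable refinement of each product $\overline{U}_{i}\times\overline{U}_{k}$ into $\{\overline{U}_{i}\times(\overline{U}_{k}\cap T^{j}\overline{B}_{\epsilon_{i}/2}(U_{i}))\}_{j\in\mathbb{Z}}$ together with $\overline{U}_{i}\times\bigl(\overline{U}_{k}\setminus\bigcup_{j\in\mathbb{Z}}T^{j}B_{\epsilon_{i}/2}(U_{i})\bigr)$. For a piece of the first type, $B_{\epsilon_{i}}(U_{i})$ by itself is already a local $n$-marker for every $n$, since both factors lie in the forward orbit $\bigcup_{m}T^{m}B_{\epsilon_{i}}(U_{i})$; for the remainder piece, the second factor is at positive distance from $\bigcup_{|m|\leq n-1}T^{m}\overline{U}_{i}$ and a union of small $\delta$-neighborhoods works. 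This additional subdivision is what your argument is missing.
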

\begin{proof}
We will show there is a closed countable cover which has the local
marker property relatively to $S=(X\times X)\setminus(\triangle\cup(X\times P)\cup(P\times X))$
by defining $S_{1},S_{2},S_{3}\subset X\times X$ so that $S=S_{1}\cup S_{2}\cup S_{2}^{*}\cup S_{3}$
where $S_{2}^{*}={\{(y,x)|\,(x,y)\in S_{2}\}},$ and exhibiting $3$
closed countable covers which have the local marker property relatively
to $S_{1}$, $S_{2}$ and $S_{3}$ respectively (the case of $S_{2}^{*}$
will follow from the case of $S_{2}$). As $(\Omega(X),T)$ has the
local marker property, there exists a a countable cover $\{Z_{i}\times W_{i}\}_{i=1}^{\infty}$
which has the local marker property relatively to $(\Omega(X)\times\Omega(X))\setminus(\triangle\cup(\Omega(X)\times P)\cup(P\times\Omega(X)))$,
however it is important to note this is w.r.t the topology induced
by $\Omega(X)$. Also note that for all $i$, $Z_{i}\cup W_{i}\subset\Omega(X)$.
We now define $S_{1},S_{2},S_{3}$:
\begin{enumerate}
\item $S_{1}=(\Omega(X)\times\Omega(X))\cap S=(\Omega(X)\times\Omega(X))\setminus(\triangle\cup(\Omega(X)\times P)\cup(P\times\Omega(X))).$
We claim $\{Z_{i}\times W_{i}\}_{i=1}^{\infty}$ has the local marker
property relatively to $S_{1}$ (w.r.t the topology induced by $X$).
Indeed fix $k$. Let $F$ be a closed (in $\Omega(X)$ and therefore
in $X$) $n$-marker for $Z_{k}\times W_{k}$ in $\Omega(X)$. Clearly
one can find an $\epsilon>0$ so that $B_{\epsilon}(F)\subset X$
is an open $n$-marker for $Z_{k}\times W_{k}$ in $X$.
\item $S_{2}=((X\setminus\Omega(X))\times\Omega(X))\cap S$. As $X\times X$
is second-countable, every subspace is Lindelöf, i.e. every open cover
has a countable subcover. Let ${\{U_{i}\}_{i=1}^{\infty}}$ be an
open cover of $X\setminus\Omega(X)$ such that for each $i$ there
is an $\epsilon_{i}>0$ so that $\{T^{k}B_{\epsilon_{i}}(U_{i})\}_{k\in\mathbb{Z}}$
are pairwise disjoint. We claim the countable closed cover ${\{\overline{U}_{i}\times W_{k}\}_{i,k=1}^{\infty}}$
has the local marker property relatively to $S_{2}$. First observe
that as $B_{\epsilon_{i}}(U_{i})\subset X\setminus\Omega(X)$, $\overline{U}_{i}\cap W_{k}=\emptyset$.
Now fix $i,k,n$. Let $F\subset\Omega(X)$ be a closed $n$-marker
for $Z_{k}\times W_{k}$. Let $0<\delta<d(\bigcup_{j=-(n-1)}^{n-1}T^{j}B_{\epsilon_{i}/2}(U_{i}),\Omega(X))$
so that $B_{\delta}(F)$ is still an open $n$-marker for $Z_{k}\times W_{k}$.
We claim $B_{\delta}(F)\cup B_{\epsilon_{i}/2}(U_{i})$ is an open
$n$-marker for $\overline{U}_{i}\times W_{k}$. Indeed as $F\subset\Omega(X)$,
the choice of $\delta$ guarantees $T^{j_{1}}B_{\delta}(F)\cap T^{j_{2}}B_{\epsilon_{i/2}}(U_{i})=\emptyset$
for all $0\leq j_{1},j_{2}\leq n-1$.
\item $S_{3}=(X\setminus\Omega(X))^{2}\cap S$. Let ${\{U_{i}\}_{i=1}^{\infty}}$
be the open cover of the previous case. We can assume w.l.o.g that
for each $x,y\in(X\setminus\Omega(X))^{2}$ with $x\neq y$ there
are $i\neq k$ so that $x\in U_{i}$ and $y\in U_{k}$ and $\overline{U_{i}}\cap\overline{U_{k}}=\emptyset$.
Note $\mathcal{C}={\{\overline{U}_{i}\times\overline{U}_{k}|i,k\in\mathbb{Z},\:\overline{U_{i}}\cap\overline{U_{k}}=\emptyset\}}$
is a closed cover of $(X\setminus\Omega(X))^{2}\cap S$. A countable
closed cover which has the local marker property relatively to $S_{3}$,
will be achieved by splitting each member of the cover $\mathcal{C}$
to an union of at most a countable number of closed products. Fix
$i\neq k$ so that $\overline{U}_{i}\times\overline{U}_{k}\in\mathcal{C}$.
If for all $j\in\mathbb{Z}$, $T^{j}B_{\epsilon_{i}}(U_{i})\cap B_{\epsilon_{k}}(U_{k})=\emptyset$,
then $B_{\epsilon_{i}}(U_{i})\cup B_{\epsilon_{k}}(U_{k})$ is an
open $n$-marker of $\overline{U}_{i}\times\overline{U}_{k}$ for
all $n$. Assume this is not the case. Note that $\{\overline{U}_{i}\times(\overline{U}_{k}\cap T^{j}\overline{B}_{\epsilon_{i}/2}(U_{i}))\}_{j\in\mathbb{Z}}\cup\{\overline{U}_{i}\times(\overline{U}_{k}\setminus\bigcup_{j\in\mathbb{Z}}T^{j}B_{\epsilon_{i}/2}(U_{i}))\}$
is a countable closed cover of $\overline{U}_{i}\times\overline{U}_{k}$.
Let $j\in\mathbb{Z}$. Note that $B_{\epsilon_{i}}(U_{i})$ is an
open $n$-marker of $\overline{U}_{i}\times(\overline{U}_{k}\cap T^{j}\overline{B}_{\epsilon_{i}/2}(U_{i}))$
for all $n$. Fix $n$. As $d(\bigcup_{j=-(n-1)}^{n-1}\bigcup T^{k}(\overline{U}_{i}),\overline{U}_{k}\setminus\bigcup_{j\in\mathbb{Z}}T^{j}B_{\epsilon_{i}/2}(U_{i}))>0$,
there is $\delta>0$ so that $T^{j_{1}}B_{\delta}(\overline{U}_{i})\cap T^{j_{2}}B_{\delta}(\overline{U}_{k}\setminus\bigcup_{j\in\mathbb{Z}}T^{j}B_{\epsilon_{i}/2}(U_{i}))=\emptyset$
for all $0\leq j_{1},j_{2}\leq n-1$. Taking $\delta<min\{\epsilon_{i},\epsilon_{k}\}$,%
{} guarantees that $B_{\delta}(\overline{U}_{i})\cup B_{\delta}(\overline{U}_{k}\setminus\bigcup_{j\in\mathbb{Z}}T^{j}B_{\epsilon_{i}/2}(U_{i}))$
is an open $n$-marker for $\overline{U}_{i}\times\overline{U}_{k}\setminus\bigcup_{j\in\mathbb{Z}}T^{j}B_{\epsilon_{i}/2}(U_{i})$.
\end{enumerate}
\end{proof}
\section{The Strong Topological Rokhlin Property \label{sec:The-Strong-Topological}}

In \cite[Subsection 1.9]{G11} the topological Rokhlin property was
introduced (see also Subsection \ref{sub:Topological-Rokhlin-Property}).
Here is a stronger variant, originating in \cite{L99}:
\begin{defn}
We say that $(X,T)$ has the \textbf{(global) strong topological Rokhlin
property} if for every $n\in\mathbb{N}$ there exists a continuous
function $f:X\rightarrow\mathbb{R}$ so that if we define the \textit{exceptional
set} $E_{f}=\{x\in X\,|\, f(Tx)\neq f(x)+1\}$, then $T^{-i}(E_{f})$, $i=0,1,\ldots, n-1$, are pairwise disjoint.\end{defn}
\begin{rem}
\label{rem:at most one bad index (global case)}Assume $b-a\leq n-1$.
Under the above conditions consider $x\in X$. Then there exists at
most one index $a\leq l\leq b$ so that $f(T^{l+1}x)\neq f(T^{l}x)+1$.
Indeed if $T^{l}x,T^{l'}x\in E_{f}$ for $a\leq l<l'\leq b$, then
$E_{f}\cap T^{l-l'}E_{f}\neq\emptyset$ contradicting the definition.
\end{rem}

\begin{defn}
\label{def: local strong Rokhlin}We say that $(X,T)$ has the \textbf{local
strong topological Rokhlin property} if one can cover $(X\times X)\setminus(\triangle\cup(X\times P)\cup(P\times X))$
by a countable collection of products of closed sets ${\{Z_{i}\times W_{i}\}_{k=1}^{\infty}}$,
where for every $k$, $Z_{k}\cap W_{k}=\emptyset$ and for every $m\in\mathbb{N}$
and $a,b\in\mathbb{Z}$ with $a<b$ there exists a continuous function
$f:\bigcup_{i=a}^{b}T^{i}(Z_{k}\cup W_{k})\rightarrow\mathbb{R}$
so that if we define the \textit{exceptional set} $E_{f}=\{x\in\bigcup_{i=a}^{b-1}T^{i}(Z_{k}\cup W_{k})\,|\, f(Tx)\neq f(x)+1\}$,
then $T^{-i}(E_{f})$, $i=0,\ldots,m-1$, are pairwise disjoint
(note $x\in\bigcup_{i=a}^{b-1}T^{i}(Z_{k}\cup W_{k})$ implies that
both $f(x)$ and $f(Tx)$ are defined). In this context ${\{Z_{i}\times W_{i}\}_{k=1}^{\infty}}$
is also said to have the \textbf{local strong topological Rokhlin property}.\end{defn}
\begin{rem}
\label{rem:at most one bad index}Assume $m\geq b-a-2$. Under the
above conditions consider $x\in(Z_{k}\cup W_{k})$. Then there exists
at most one index $a\leq l\leq b-1$ so that $f(T^{l+1}x)\neq f(T^{l}x)+1$.
Indeed $T^{l}x\in\bigcup_{i=a}^{b-1}T^{i}(Z_{k}\cup W_{k})$ and $f(T^{l+1}x))\neq f(T^{l}x)+1$
imply $T^{l}x\in E_{f}$. If $T^{l}x,T^{l'}x\in E_{f}$ for $a\leq l<l'\leq b-1$,
then $E_{f}\cap T^{l-l'}E_{f}\neq\emptyset$ contradicting the definition.
\end{rem}
The following proposition is based on Lemma 3.3 of \cite{L99}, which is the statement that a system with an aperiodic minimal factor has the strong topological Rokhlin property:
\begin{prop}
\label{prop:local marker property-->local strong Rokhlin}If $(X,T)$
has the local marker property then $(X,T)$ has the local strong topological
Rokhlin property. \end{prop}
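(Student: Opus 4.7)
The plan is to verify the local strong topological Rokhlin property using the same countable cover $\{Z_i \times W_i\}_{i=1}^{\infty}$ that witnesses the local marker property. Fix $k \in \mathbb{N}$, and let $m \in \mathbb{N}$ and integers $a < b$ be given. Set $Y = \bigcup_{i=a}^{b} T^{i}(Z_{k} \cup W_{k})$; I must construct a continuous $f : Y \to \mathbb{R}$ so that the exceptional set $E_{f}$ has $T^{-i}(E_{f})$ pairwise disjoint for $i = 0, \ldots, m-1$.

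First I would apply the local marker property to $Z_{k} \times W_{k}$ to pick an open $N$-marker $F \subseteq X$, choosing $N$ much larger than $m$ and $b-a$ (say $N \geq 2m + (b-a) + 2$). The covering clause yields some $M$ with $Z_{k} \cup W_{k} \subseteq \bigcup_{i=0}^{M-1} T^{i}F$; shifting by $i \in [a,b]$ gives $Y \subseteq \bigcup_{i=a}^{b+M-1} T^{i}F$. I would then fix a continuous cutoff $\phi : X \to [0,1]$ with $\phi \equiv 1$ on a closed subset $F_{0} \subseteq F$, chosen large enough (by taking $F$ interior to a still larger marker if necessary) that the backward orbit of every $x \in Y$ meets $F_{0}$ within $J \teq b-a+M$ steps, and with $\phi \equiv 0$ on $X \setminus F$.

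Following the argument of Lemma 3.3 of \cite{L99}, I would define
\[
f(x) \teq \min_{0 \leq j \leq J}\bigl(j - N\phi(T^{-j}x)\bigr), \qquad x \in Y.
\]
As a minimum of finitely many continuous functions, $f$ is continuous on $Y$. A direct bookkeeping of minimizing indices --- writing $f(Tx)$ and re-indexing $j \mapsto j-1$ --- shows that if $j_{0}$ realises the minimum for $f(x)$, then the shifted index $j_{0}+1$ produces the value $f(x)+1$ at $Tx$; this \emph{is} the minimum at $Tx$ unless some smaller index produces a strictly smaller value. For this to happen, the candidate $j - N\phi(T^{-j}Tx)$ must be small for some small $j$, and by the $N$-marker property (no two visits within $N$ of each other), the only genuinely new possibility is $j=0$ with $Tx \in F$. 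Consequently $E_{f} \subseteq T^{-1}F$.

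The disjointness is then immediate: $F$ is an $N$-marker with $N \geq m+1$, so $F \cap T^{j}F = \emptyset$ for $1 \leq j \leq m-1$, which implies the sets $T^{-j}F$ are pairwise disjoint for $j = 1, \ldots, m$; hence $T^{-i}(E_{f}) \subseteq T^{-(i+1)}F$ for $i = 0, \ldots, m-1$ are pairwise disjoint, as required. I expect the main obstacle to be the precise verification that $E_{f} \subseteq T^{-1}F$: the cushion $N\phi$ in the formula has to simultaneously make $f$ continuous across the boundary $\partial F$ and ensure that the minimizing index shifts cleanly by $+1$ under $T$ whenever $Tx \notin F$. This requires $N$ substantially larger than $J$, together with a careful check that none of the "new" small-$j$ terms at $Tx$ can beat the shifted witness $j_{0}+1$ unless they actually come from a point in $F$ itself. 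A secondary, minor, issue is that $f$ is defined only on the restricted domain $Y$, so one must verify that the relevant minimum is always realised inside the allowed index range --- this is precisely what the choice of $F_0$ and $J$ ensures.
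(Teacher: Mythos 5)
The construction you propose differs from the paper's. The paper follows Lindenstrauss (\cite{L99}, Lemma 3.3) literally: it defines $\omega:X\to[0,1]$ supported inside the marker $F$, runs a backward random walk that terminates at $p$ with probability $\omega(p)$, and takes $f$ to be the \emph{expected length} of that walk. With that formula, $Tx\notin F$ forces $\omega(Tx)=0$, so the walk from $Tx$ deterministically steps to $x$ and $f(Tx)=f(x)+1$ follows in one line; there is no quantitative interplay between the marker order and the covering constant. Your min-with-cushion formula $f(x)=\min_{0\le j\le J}\bigl(j-N\phi(T^{-j}x)\bigr)$ is instead in the spirit of the Krieger/Boyle symbolic marker constructions, and it is a legitimate alternative in principle --- but not as you have parametrized it.

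The genuine gap is a circular dependency you never resolve. You use the single number $N$ both as the order of the marker $F$ and as the cushion coefficient in the formula, and you pick $N$ up front from $m$ and $b-a$ alone ($N\ge 2m+(b-a)+2$). But your own closing remarks correctly observe that for the minimizing index to shift cleanly by $+1$ and for $E_f$ to land in $T^{-1}F$, one needs the cushion to dominate $J$, i.e. $N>J$. Here $J=b-a+M$, where $M$ is the covering constant produced by the marker, and $M$ is only determined \emph{after} you fix the $N$-marker $F$ --- and can be arbitrarily large compared to $N$. So the constraint $N\ge 2m+(b-a)+2$ cannot possibly guarantee $N>J$, and the two requirements you impose on $N$ are mutually inconsistent. (Replacing $F$ by a ``still larger marker'' does not help; the new $M$ moves too.) There is also a smaller index-bookkeeping issue when $a<0$, since the covering clause gives $Y\subseteq\bigcup_{i=a}^{b+M-1}T^iF$, whose index range need not lie in $[0,J]$; this requires a shift of $F$ that you do not perform. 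Both problems can be repaired --- most cleanly by decoupling the cushion constant from the marker order: take $F$ an $m$-marker, read off $M$ and then $J$, and only afterwards choose a constant $C>J+1$ to use in $f(x)=\min_{0\le j\le J}(j-C\phi(T^{-j}x))$ --- but as written the argument does not close.
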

\begin{proof}
By assumption one can cover $(X\times X)\setminus(\triangle\cup(X\times P)\cup(P\times X))$
by a countable collection of products of closed sets ${\{Z_{i}\times W_{i}\}_{i=1}^{\infty}}$,
with the local marker property. We will now show that ${\{Z_{i}\times W_{i}\}_{i=1}^{\infty}}$
has the local strong topological Rokhlin property. Fix $m,k\in\mathbb{N}$
and $a,b\in\mathbb{Z}$ with $a<b$. Let $F$ be an open $m$ marker
for $Z_{k}\cup W_{k}$. It will be convenient to write $Z_{k}\cup W_{k}\subset\bigcup_{i=-a}^{q-b}T^{i}(F)$
for some $q>b-a$. Choose a closed $R\subset F$ so that $Z_{k}\cup W_{k}\subset\bigcup_{i=-a}^{q-b}T^{i}R$.
Conclude:

\begin{equation}
\bigcup_{i=a}^{b}T^{i}(Z_{k}\cup W_{k})\subset\bigcup_{i=0}^{q}T^{i}R\label{eq:R_covers}
\end{equation}
Let $\omega:X\rightarrow[0,1]$ be a continuous function so that $\omega_{|R}\equiv1$
and $\omega_{|F{}^{c}}\equiv0$. We define a random walk for $z\in X$.
At any point $p$ we arrive during the random walk, the walk terminates
with probability $\omega(p)$ and moves to $T^{-1}p$ with probability
$1-\omega(p)$. Notice that for every point $z\in\bigcup_{i=a}^{b}T{}^{i}(Z_k\cup W_k)$
the random walk will terminate after a finite number of steps. Indeed
by (\ref{eq:R_covers}) there is an $i\in\{0,\ldots,q\}$ so that
$z\in T^{i}R$, which implies the walk terminates in at most $q$
steps (when the point hits $R$). Conclude there is a finite number
of possible walks starting at $z$ and we denote by $f(z)$ the expected
length of the walk starting at $z$. As there is a uniform bound on
the length of walks, $f:\bigcup_{i=a}^{b}T^{i}(Z_{k}\cup W_{k})\rightarrow\mathbb{R}_{+}$
is continuous. Note that if $y\notin F$ and $y\in\bigcup_{i=a+1}^{b}T^{i}(Z_{k}\cup W_{k})$
, then $f(T^{-1}y)=f(y)-1$. Notice that $x\in\bigcup_{i=a}^{b-1}T^{i}(Z_{k}\cup W_{k})$
and $f(Tx)\neq f(x)+1$ implies $y\triangleq Tx\in\bigcup_{i=a+1}^{b}T^{i}(Z_{k}\cup W_{k})$
and $f(T^{-1}y)=f(TT^{-1}x)=f(x)\neq f(Tx)-1=f(y)-1$. Therefore in
such a case one must have $Tx=y\in F$. Conclude $E_{f}=\{x\in\bigcup_{i=a}^{b-1}T^{i}(\overline{U}\cup\overline{V})\,|\, f(Tx)\neq f(x)+1\}\subset T^{-1}F$.
We therefore have $T^{-i}(E_{f})\cap E_{f}=\emptyset\ i=1,\ldots,m-1$.

\end{proof}
The following question is interesting:
\begin{problem}
\label{Ques:Local Strong Rokhlin-->Local Marker?}Does the the local
strong topological Rokhlin property imply the local marker property?
\end{problem}
The question can be answered assuming the global strong topological
Rokhlin property:
\begin{thm}
\label{thm:Marker Property iff Strong Rokhlin}$(X,T)$ has
the strong topological Rokhlin property iff $(X,T)$ has the marker
property. \end{thm}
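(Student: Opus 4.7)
The theorem has two directions, and I would handle each implication for arbitrary fixed $n \in \mathbb{N}$.

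For marker property $\Rightarrow$ strong Rokhlin property, the plan is to globalize the random-walk construction from Proposition \ref{prop:local marker property-->local strong Rokhlin} by taking $X$ in place of $Z_{k}\cup W_{k}$. I choose an open $n$-marker $F$ for $X$ and a closed $R\subset F$ such that $\{T^{i}R\}_{i=0}^{q}$ still covers $X$ for some $q$, together with a continuous $\omega:X\to[0,1]$ with $\omega|_{R}\equiv 1$ and $\omega|_{F^{c}}\equiv 0$. Defining $f(x)$ as the expected length of the random walk starting at $x$, terminating at $p$ with probability $\omega(p)$ and otherwise passing to $T^{-1}p$, produces a globally defined continuous $f:X\to\mathbb{R}_{+}$, since the walk terminates within $q$ steps. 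The same identity as in the local case shows $E_{f}\subset T^{-1}F$, and since the $n$-marker property forces $\{T^{-i-1}F\}_{i=0}^{n-1}$ to be pairwise disjoint, so are their subsets $\{T^{-i}E_{f}\}_{i=0}^{n-1}$, which is the strong Rokhlin condition.

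For strong Rokhlin property $\Rightarrow$ marker property, the key observation is that $E_{f}$ itself is already an open $n$-marker. Given the continuous $f:X\to\mathbb{R}$ provided by the strong Rokhlin property for $n$, I set $F:=E_{f}$. First, $E_{f}=\{x:f(Tx)-f(x)\neq 1\}$ is open as the preimage of $\mathbb{R}\setminus\{1\}$ under a continuous map. Second, applying $T^{i}$ to the Rokhlin disjointness gives $F\cap T^{i}F=\emptyset$ for $1\leq i\leq n-1$. Third, for the covering condition, fix $x\in X$; if $T^{-j}x\notin E_{f}$ for every $j\geq 1$, then $f(T^{-j}x)=f(x)-j$ by induction, contradicting the boundedness of $f$ on the compact space $X$. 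Thus every $x$ lies in $T^{j}E_{f}$ for some $j\geq 1$, and compactness of $X$ combined with openness of each $T^{j}E_{f}$ yields a finite $m$ with $\bigcup_{i=1}^{m}T^{i}E_{f}=X$.

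Neither direction presents a serious obstacle. The forward direction is essentially a direct transcription of the local proposition once the $Z_{k}\cup W_{k}$ restriction is dropped, so that the auxiliary data ($F$, $R$, $\omega$) live globally on $X$. The reverse direction rests on the structural fact that the exceptional set of any strong Rokhlin function is automatically marker-disjoint, so the only verification of substance is the covering property, which is driven by the boundedness of $f$ on the compact $X$ via a short telescoping argument followed by a standard compactness reduction.
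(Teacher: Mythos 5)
Your proof is correct and follows essentially the same approach as the paper: the forward direction is a transcription of the random-walk construction from Proposition~\ref{prop:local marker property-->local strong Rokhlin} with $X$ in place of $Z_k\cup W_k$, and the reverse direction uses $E_f$ itself as the open $n$-marker, with disjointness coming directly from the Rokhlin condition and covering from boundedness of $f$ on compact $X$ (the paper telescopes forward along $T^i x$ using the upper bound, you telescope backward along $T^{-j}x$ using the lower bound, but these are trivially equivalent).
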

\begin{proof}
The fact that the marker property implies the strong topological Rokhlin
property follows by a similar argument to the proof of Proposition
\ref{prop:local marker property-->local strong Rokhlin}. To prove
the other direction assume $(X,T)$ has the strong topological Rokhlin
property. Fix $n\in\mathbb{N}$ and let $f:X\rightarrow\mathbb{R}$
be a continuous function such that for the open set $E_{f}=\{x\in X\,|\, f(Tx)\neq f(x)+1\}$,
$T^{-i}(E_{f})$, $i=0,1,\ldots, n-1$, are pairwise disjoint. We claim that
the iterates $E_{f}$,$T^{-1}E_{f},\ldots$ eventually cover $X$.
Indeed as $f$ is bounded from above for any $x\in X$, the series
$f(T^{i}x)$, $i=1,2,\ldots$ cannot increase indefinitely. %

\end{proof}

\section{An Embedding Theorem\label{sec:An-Embedding-Theorem}}

\subsection{The Baire Category Framework\label{sub:The-Baire-Catergory}}

We are interested in the question under which conditions a topological
dynamical system $(X,T)$ is embeddable in the \textit{$d$-cubical
shift} $(([0,1]^{d})^{\mathbb{Z}},\ shift)$ for some $d\in\mathbb{N}$.
Notice that a continuous function $f:X\rightarrow[0,1]^{d}$ induces
a continuous $\mathbb{Z}$-equivariant mapping $I_{f}:(X,T)\rightarrow(([0,1]^{d})^{\mathbb{Z}},shift)$
given by $x\mapsto(f(T^{k}x))_{k\in\mathbb{Z}}$, also known as the
\textit{orbit-map}. Conversely, any $\mathbb{Z}$-equivariant continuous
factor map $\pi:(X,T)\rightarrow(([0,1]^{d})^{\mathbb{Z}},shift)$
is induced in this way by $\pi_{0}:X\rightarrow[0,1]^{d}$, the projection
on the zeroth coordinate. We therefore study the space of continuous
functions $C(X,[0,1]^{d})$. Instead of explicitly constructing a
$f\in C(X,[0,1]^{d})$ so that $I_{f}:(X,T)\hookrightarrow(([0,1]^{d})^{\mathbb{Z}},shift)$
is an embedding, we show that the property of being an embedding $I_{f}:(X,T)\hookrightarrow(([0,1]^{d})^{\mathbb{Z}},shift)$
is \textit{generic} in $C(X,[0,1]^{d})$ (but without exhibiting an
explicit embedding). To make this precise introduce the following
definition:
\begin{defn}
Let $K\subset(X\times X)\setminus\Delta$ be a compact set and $f\in C(X,[0,1]^{d})$.
We say that $I_{f}$ is \textbf{$K$-compatible} if for every $(x,y)\in K$,
$I_{f}(x)\neq I_{f}(y)$, or equivalently if for every $(x,y)\in K$,
there exists $n\in\mathbb{Z}$ so that $f(T^{n}x)\neq f(T^{n}y)$.
Define:
\[
D_{K}=\{f\in C(X,[0,1]^{d})|\, I_{f}\,\,\mathrm{is\,\, K}-\mathrm{compatible}\}
\]
By Lemma A.2 of \cite{Gut12a}, $D_{K}$ is open in $C(X,[0,1]^{d})$.
Under the assumptions of the theorem below we will show there exists
a closed countable cover $\mathcal{K}$ of $(X\times X)\setminus\Delta$
so that $D_{K}$ is dense for all $K\in\mathcal{K}$. By the Baire
Category Theorem $(C(X,[0,1]^{d}),||\cdot||_{\infty})$, is a \textit{Baire
space}, i.e., a topological space where the intersection of countably
many dense open sets is dense. This implies $\bigcap_{K\in\mathcal{K}}D_{K}$
is dense in $(C(X,[0,1]^{d}),||\cdot||_{\infty})$. Any $f\in\bigcap_{K\in\mathcal{K}}D_{K}$
is $K$-compatible for all $K\in\mathcal{K}$ simultaneously and therefore
induces an embedding $I_{f}:(X,T)\hookrightarrow(([0,1]^{d})^{\mathbb{Z}},shift)$.
A set in a topological space is said to be \textit{comeagre} or \textit{generic}
if it is the complement of a countable union of nowhere dense sets.
A set is said to be $G_{\delta}$ if it is the countable intersection
of open sets. As a dense $G_{\delta}$ set is comeagre, the above
argument shows that the set $\mathcal{A}\subset C(X,[0,1]^{d})$ for
which $I_{f}:(X,T)\hookrightarrow(([0,1]^{d})^{\mathbb{Z}},shift)$
is an embedding is comeagre, or equivalently, that the fact of $I_{f}$
being an embedding is generic in $(C(X,[0,1]^{d}),||\cdot||_{\infty})$.
\end{defn}

\subsection{The Embedding Theorem}
\begin{thm}
\label{thm:Local Marker + P-Embedding implies 36 Embedding}Assume
$(X,T)$ has the local marker property. Let $d\in\mathbb{N}$ be such
that $mdim(X,T)<\frac{d}{36}$ and $perdim(X,T)<\frac{d}{2}$, then
the collection of continuous functions $f:X\rightarrow[0,1]^{d}$
so that $I_{f}:(X,T)\hookrightarrow(([0,1]^{d})^{\mathbb{Z}},shift)$
is an embedding is comeagre in $C(X,[0,1]^{n})$.\end{thm}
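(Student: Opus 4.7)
The plan is to verify the hypotheses of the Baire category framework of Subsection \ref{sub:The-Baire-Catergory}: exhibit a countable collection $\mathcal{K}$ of closed subsets whose union is $(X\times X)\setminus\Delta$ and such that $D_K$ is dense (hence open and dense) in $C(X,[0,1]^d)$ for every $K\in\mathcal{K}$. Once this is done, the Baire Category Theorem applied to the complete metric space $(C(X,[0,1]^d),\|\cdot\|_\infty)$ produces a comeagre set of $f$ which are simultaneously $K$-compatible for all $K\in\mathcal{K}$, and any such $f$ induces the desired embedding $I_f$.

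I would partition the cover into three regimes. First, for each $n\in\mathbb{N}$ take the closed set $K_n\subset (P_n\times P_n)\setminus \Delta$; density of $D_{K_n}$ uses only $perdim(X,T)<d/2$. The argument is the one already employed in \cite{Gut12a} for finite-dimensional systems: since $\dim P_n<nd/2$, the $n$-block orbit map $f\mapsto(f,f\circ T,\dots,f\circ T^{n-1})$ from $C(X,[0,1]^d)$ to $C(P_n,[0,1]^{nd})$ hits a generic set of maps, and a Menger--N\"obeling density argument shows that embeddings of $P_n$ into $[0,1]^{nd}$ form a dense $G_\delta$. Second, the mixed strata $(P\times(X\setminus P))\cup((X\setminus P)\times P)$ are covered by countably many closed products $\overline{B}\times\overline{U}$ with $B$ a small ball in some $H_n$ and $U$ a small ball meeting $X\setminus P$ whose forward/backward iterates separate it from $P_n$; density of the corresponding $D_K$ follows by combining the $P_n$-embedding of the preceding step with the fact that any aperiodic orbit eventually escapes any given neighbourhood of $P$.

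The remaining and principal regime is the aperiodic--aperiodic locus $S=(X\times X)\setminus(\Delta\cup(X\times P)\cup(P\times X))$. Here the local marker property together with Proposition \ref{prop:local marker property-->local strong Rokhlin} supplies a countable cover $\{Z_k\times W_k\}_{k=1}^{\infty}$ of $S$ with $Z_k\cap W_k=\emptyset$, having the local strong topological Rokhlin property. Fix $k$, $f\in C(X,[0,1]^d)$ and $\epsilon>0$; I would construct $g\in D_{Z_k\times W_k}$ with $\|g-f\|_\infty<\epsilon$ as follows. Choose parameters $a<b$ and $m$ with $m\gg b-a$ and obtain the clock function $\varphi:\bigcup_{i=a}^{b}T^{i}(Z_k\cup W_k)\rightarrow\mathbb{R}$; by Remark \ref{rem:at most one bad index} the exceptional set $E_\varphi$ is very sparse along each orbit, which partitions each relevant orbit segment into long ``clean'' blocks on which $\varphi$ is a genuine counter. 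On each clean block, the assumption $mdim(X,T)<d/36$ allows a Lindenstrauss--Tsukamoto-style perturbation of $f$, using the clock to address a sub-collection of the $d$ cube-coordinates and using the mean-dimension bound to fit an injective image of the block with room to spare (the constant $36$ comes from the averaging, partition-of-unity and overlap losses in that construction, identical to those in the inverse-limit / subshift embedding of \cite{Gut12a}). The $\varphi$-indexing then glues these local injectivity gains into a global perturbation $g$ that is $K$-compatible on $Z_k\times W_k$.

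The main obstacle is executing this last step with only \emph{local} Rokhlin data. The clock function $\varphi$ is defined only on the saturation $\bigcup_i T^i(Z_k\cup W_k)$, so the perturbation must be extended to the rest of $X$ with a cut-off, and one must ensure that this extension neither exceeds $\epsilon$ in sup-norm nor disturbs $K$-compatibility on previously handled members of $\mathcal{K}$. The latter is precisely what the Baire framework buys us: we never modify a fixed $f$, but only need each $D_K$ to be dense open, and the countable intersection automatically yields functions that work for all $K$ at once. Carrying out the perturbation carefully on a single $Z_k\times W_k$ with the local tower supplied by the local strong Rokhlin property, as in the proof of the main embedding theorem of \cite{Gut12a}, completes the argument.
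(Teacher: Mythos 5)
Your overall decomposition of $(X\times X)\setminus\Delta$ into three regimes (both periodic, mixed, both aperiodic) and your treatment of the first and third regimes match the paper's strategy: $\mathcal{P}\subset(P\times P)\setminus\Delta$ is handled by the periodic-dimension argument from \cite{Gut12a}, and $S=(X\times X)\setminus(\triangle\cup(X\times P)\cup(P\times X))$ is handled by Proposition \ref{lem:lstrp-->D_K dense}, which runs the Lindenstrauss clock-function perturbation inside the local tower furnished by the local strong topological Rokhlin property (Proposition \ref{prop:local marker property-->local strong Rokhlin}). So far so good.

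The mixed regime, however, is where your argument has a genuine gap. You propose to cover $(P\times(X\setminus P))\cup((X\setminus P)\times P)$ by products $\overline B\times\overline U$ and to deduce density of $D_{\overline B\times\overline U}$ by ``combining the $P_n$-embedding of the preceding step with the fact that any aperiodic orbit eventually escapes any given neighbourhood of $P$.'' That fact is simply false: an aperiodic orbit can converge to a periodic orbit and thus stay permanently inside every neighbourhood of $P$ after a finite time. Indeed the paper's own Example in Section \ref{sec:Applications} (the Hilbert cube with $\mathbf R((x_i))=(R(x_i))$, $R(0)=0$, $R(1)=1$, no other fixed points) has every nontrivial orbit converging into $\{0,1\}^{\mathbb N}\subset P$. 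Moreover, even when escape does hold, it is not clear how the $P_n$-embedding of the preceding step would force $I_f(x')\neq I_f(y')$ for $x'$ aperiodic and $y'\in H_n$.

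The correct mechanism, used in Proposition \ref{prop:D_K is dense (periodic case)}, is of a different nature. For $y'\in H_n$ the window $I_f(y')|_r^{r+2S-1}$ is an $n$-periodic string, hence lies in the affine subspace
\[
V_{2S}^{n}=\{(w_{0},\ldots,w_{2S-1})\in([0,1]^{d})^{2S}\ |\ a\equiv b\!\!\!\mod n\Rightarrow w_{a}=w_{b}\},
\]
whose dimension is only $nd$. One then perturbs $f$ using the local tower over the \emph{aperiodic} side $Z$ alone and shows (Lemma \ref{lem:F not in V_n}, a variant of Lemma 5.6 of \cite{L99}) that a generic choice of the auxiliary map $F$ makes the resulting convex combination on the aperiodic side avoid $V_{2S}^{n}$; the dimension count $2(\mathrm{ord}(\gamma)+1)+nd\leq 2Sd$ is what uses the $mdim<d/36$ bound, not the periodic-dimension bound. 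This is an argument you cannot obtain from the first step plus escape; it is a separate genericity/dimension-avoidance lemma that your sketch is missing. Everything else in your proposal is essentially the paper's route.
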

\begin{proof}
As explained in Subsection \ref{sub:The-Baire-Catergory} we need
to exhibit a closed countable cover $\mathcal{C}$ of $(X\times X)\setminus\Delta$
so that $D_{C}$ is dense for all $C\in\mathcal{C}$. By Proposition
\ref{prop:local marker property-->local strong Rokhlin} $(X,T)$
has the local strong topological Rokhlin property. By Proposition
\ref{lem:lstrp-->D_K dense} below one can cover $X\times X\setminus\big(\triangle\cup(X\times P)\cup(P\times X)\big)$
by a closed countable cover $\mathcal{W}$ so that for all $W\in\mathcal{W}$,
$D_{W}$ is dense in $C(X,[0,1]^{d})$. Let $P_{n}$ denote the set
of points of period $\leq n$ and define $H_{n}=P_{n}\setminus P_{n-1}$.
By Proposition \ref{prop:D_K is dense (periodic case)} below for
every $n\in\mathbb{N}$ there is a countable closed cover $\mathcal{K}_{n}$
of $\big((X\setminus P)\times H_{n}\big)\cup\big(H_{n}\times(X\setminus P)\big)$
so that for all $K\in\mathcal{K}_{n}$, $D_{K}$ is dense in $C(X,[0,1]^{d})$.
By the proof of Theorem 4.1 of \cite{Gut12a} there is closed countable
cover $\mathcal{P}$ of $(P\times P)\setminus\Delta$ so that $D_{K}$
is dense for all $K\in\mathcal{K}$. Let $\mathcal{C}=\mathcal{W}\cup\mathcal{P}\cup\bigcup_{n}\mathcal{K}_{n}$.
As $(X\times X)\setminus\Delta$ is the union of $(X\times X)\setminus\big(\triangle\cup(X\times P)\cup(P\times X)\big)$,
$\bigcup_{n}\big((X\setminus P)\times H_{n}\big)\cup\big(H_{n}\times(X\setminus P)\big)=\big((X\setminus P)\times P\big)\cup\big(P\times(X\setminus P)\big)$
and $(P\times P)\setminus\Delta$, clearly $\mathcal{C}$ has the
desired properties. We now proceed to prove Proposition \ref{lem:lstrp-->D_K dense}
and Proposition \ref{prop:D_K is dense (periodic case)}. Throughout,
it turns out to be convenient to define:

\[
m_{dim}=\begin{cases}
\frac{1}{72} & \quad mdim(X,T)=0\\
mdim(X,T) & \quad\mathrm{otherwise}
\end{cases}
\]
Notice it holds $36m_{dim}<d$.\end{proof}
\begin{prop}
\label{lem:lstrp-->D_K dense}Let $\mathcal{K}$ be a countable closed
cover of $X\times X\setminus\big(\triangle\cup(X\times P)\cup(P\times X)\big)$
which has the local marker property, then for all $K\in\mathcal{K}$,
$D_{K}$ is dense in $C(X,[0,1]^{d})$.\end{prop}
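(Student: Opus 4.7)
The plan is to adapt the tower-and-perturbation argument of Lindenstrauss \cite{L99} (as extended in \cite{Gut12a}) from the aperiodic setting to the local tower structure furnished by the local marker property. Fix $K \in \mathcal{K}$; by the definition of a cover with the local marker property, $K$ is of the form $Z \times W$, where $Z, W$ are disjoint closed subsets of $X \setminus P$ and $Z \times W$ admits open $n$-markers for every $n$. Fix $f_0 \in C(X,[0,1]^{d})$ and $\varepsilon > 0$; the goal is to produce $f \in D_{K}$ within $\varepsilon$ of $f_0$ in the supremum norm, i.e.\ a perturbation $f = f_0 + h$ such that for every $(x,y) \in Z \times W$ there exists $n \in \mathbb{Z}$ with $f(T^n x) \neq f(T^n y)$.

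First I would convert the local marker data into a usable tower. Applying Proposition~\ref{prop:local marker property-->local strong Rokhlin} to $Z \times W$ with a height $m$ and window $[a,b]$ to be fixed in terms of the mean-dimension data, I obtain a continuous clock function $g : \bigcup_{i=a}^{b} T^{i}(Z \cup W) \to \mathbb{R}$ whose exceptional set $E_{g}$ satisfies $T^{-i} E_{g} \cap E_{g} = \emptyset$ for $0 < i < m$. By Remark~\ref{rem:at most one bad index}, for each $z \in Z \cup W$ there is at most one index in $\{a,\ldots,b-1\}$ at which the orbit of $z$ crosses $E_{g}$; in particular the orbit arc $\{T^{i} z : a \le i \le b\}$ is partitioned by $g$ into two honest ladder segments on which $T$ raises $g$ by exactly one.

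The main construction is to define a perturbation $h \in C(X,[0,1]^{d})$ supported on $\bigcup_{i=a}^{b} T^{i}(Z \cup W)$, with $\|h\|_{\infty} < \varepsilon$, such that for every $(z,w) \in Z \times W$ the perturbed orbits of $z$ and $w$ differ at some time in $[a,b]$. By the inequality $36\, m_{dim} < d$ one can pick an open cover $\alpha$ and $L \in \mathbb{N}$ large so that $D(\alpha^{L}) < dL/36$. As in the proof of Theorem~6.1 of \cite{Gut12a} and in \cite{L99}, this allows one to factor the $L$-orbit map $X \to X^{L}$ through a polyhedron of dimension at most $dL/36$ and then to construct a Baire-generic continuous $\psi : X \to [0,1]^{d}$ that separates, for each $(z,w) \in Z \times W$, the $L$-tuples $(\psi(T^{i} z))_{i=0}^{L-1}$ and $(\psi(T^{i} w))_{i=0}^{L-1}$. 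Choosing $b-a$ to contain many disjoint blocks of length $L$ and using $g$ as a clock to index the position within a block, I would set $h$ on the tower to be a small rescaled version of $\psi$ patched coherently between blocks. The factor $36$ is what allows one to reserve two blocks' worth of room: one to absorb the single exceptional crossing per orbit supplied by Remark~\ref{rem:at most one bad index}, and another to absorb cutoff losses near $\partial \bigcup_{i=a}^{b} T^{i}(Z \cup W)$.

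The main obstacle I expect is coherence: $h$ must be a genuine continuous function on all of $X$, yet its definition on the tower is dictated both by the clock $g$ and by $\psi$. One needs a smooth cutoff near the boundary of the tower to extend $h$ by zero to all of $X$, and one must arrange that at a (potential) jump of $g$ across $E_{g}$ the perturbation varies only through tapered cutoffs, not through the block-internal position; the reserved blocks above are what make this loss harmless for orbit separation. Once this is done, $f = f_{0} + h$ lies in $D_{Z \times W} = D_{K}$, and since $f_{0}$ and $\varepsilon$ were arbitrary, density of $D_{K}$ in $C(X,[0,1]^{d})$ follows.
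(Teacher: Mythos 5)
Your proposal follows the same broad strategy as the paper's proof---convert the local marker property into a local Rokhlin-type clock via Proposition~\ref{prop:local marker property-->local strong Rokhlin}, then run a Lindenstrauss-style mean-dimension perturbation on the resulting tower---and you correctly identify the role of Remark~\ref{rem:at most one bad index} and the need for slack captured by the factor $36$. However, at the two most delicate points the argument has genuine gaps.

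First, the synchronization between the two orbits is missing. For $(z,w)\in Z\times W$ the exceptional crossings of the clock and the residues $\lfloor n(\cdot)\rfloor\bmod M$ can occur at \emph{different} times and with \emph{different} offsets for $z$ and $w$; your text treats the two orbits as if a single reserved block handles both. The paper needs Lemma~\ref{lem:Existence_simultaneous_good_segment} to locate a single index $r$ that lies in a good segment of length $\approx M/2$ for \emph{both} orbits simultaneously; only then can one compare the two resulting expressions $(1-\lambda)F(T^{r-a}x')|_a^{a+4\Delta-1}+\lambda F(T^{r-a-1}x')|_{a+1}^{a+4\Delta}$ and its $y'$-analogue and invoke the rigidity property of $F$ to conclude $a=a'$ and that $x',y'$ lie in a common element of $\gamma\succ\alpha^{N+1}$, contradicting $\max_U\mathrm{diam}(U)<\mathrm{dist}(Z,W)$. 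Without that simultaneous-good-segment step, the claim that separating $L$-tuples by $\psi$ yields $I_f(z)\neq I_f(w)$ does not follow, because the clock offsets may differ.

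Second, the construction of $f$ is described inaccurately. You set $f=f_0+h$ with $h$ supported on the tower and extended by a ``smooth cutoff,'' but the mechanism the argument actually needs is different: one defines a \emph{replacement} $f'$ on $A=\bigcup_{i=a}^{b}T^i(Z\cup W)$ by the convex-interpolation formula (\ref{eq:f'}), checks $\|f'-\tilde f_{|A}\|_\infty<\epsilon$, and then uses a Tietze-type extension (Lemma~A.5 of \cite{Gut12a}) to get $f$ with $f_{|A}=f'_{|A}$ and $\|f-\tilde f\|_\infty<\epsilon$ globally. This is what lets the final contradiction read off the interpolation formula exactly on the tower. A ``tapered cutoff'' of an additive perturbation would corrupt precisely the internal structure of $f'$ that the rigidity property (condition~(\ref{enu:linear-independence}) of Lemma~5.6 of \cite{L99}) exploits. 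Relatedly, ``a Baire-generic $\psi$ separating $L$-tuples'' is too weak: the needed property of $F$ is a general-position statement about convex combinations of shifted windows of $F$, which forces both equality of offsets ($a=a'$) and a common cover element; a generic separating map does not give this.

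So while your outline is in the right spirit and would converge to the paper's proof after substantial filling-in, as written the orbit-synchronization step and the precise separation mechanism are left as black boxes, and the cutoff-based extension would break the structure the separation step relies on.
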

\begin{proof}
This proof is heavily influenced by the proof of Theorem 5.1 of \cite{L99}.
Fix $K\in\mathcal{K}$ with $K=Z\times W$ with $Z,W$ closed and
$Z\cap W=\emptyset$. Fix $\epsilon>0$. Let $\tilde{f}:X\rightarrow[0,1]^{d}$
be a continuous function. We will show that there exists a continuous
function $f:X\rightarrow[0,1]^{d}$ so that $\|f-\tilde{f}\|_{\infty}<\epsilon$
and $I_{f}$ is $K$-compatible. We start by a general construction
and then relate it to $\tilde{f}$. Let $\delta=dist(Z,W)>0$. Let
$\alpha$ be a cover of $X$ with $\max_{U\in\alpha}diam(\tilde{f}(U))<\frac{\epsilon}{2}$
and $\max_{U\in\alpha}diam(U)<\delta$. Let $\epsilon'>0$ be such
that $36m_{dim}(1+2\epsilon')<d$. Let $N\in\N$ be such that it holds
$\frac{1}{N}D(\alpha^{N+1})<(1+\epsilon')m_{dim}$ (here we use $m_{dim}>0$
and Remark 2.5.1 of \cite{Gut12a}), $36(1+2\epsilon')<\epsilon'N$
and $N$ is divisible by $36$. Let $\gamma\succ\alpha^{N+1}$ be
an open cover so that $D(\alpha^{N+1})=ord(\gamma)$. We have thus
$ord(\gamma)<N(1+\epsilon')m_{dim}.$ Let $M=\frac{2}{9}N$ and $\Delta=\frac{M}{8}-1$.
Notice $M,\Delta\in\mathbb{N}$. Notice $\Delta d>(\frac{N}{36}-1)36m_{dim}(1+2\epsilon')=Nm_{dim}(1+\epsilon')+m_{dim}(N\epsilon'-36(1+2\epsilon'))>Nm_{dim}(1+\epsilon')$.
Conclude:

\[
ord(\gamma)<\Delta d
\]
 For each $U\in\gamma$ choose $q_{U}\in U$ so that $\{q_{U}\}_{U\in\gamma}$
is a collection of distinct points in $X$, and define $\tilde{v}_{U}=(\tilde{f}(T^{i}q_{U}))_{i=0}^{N-1}$.
According to Lemma 5.6 of \cite{L99}, one can find a continuous function
$F:X\rightarrow([0,1]^{d})^{N}$, with the following properties:
\begin{enumerate}
\item $\forall U\in\gamma$, $||F(q_{U})-\tilde{v}_{U}||_{\infty}<\frac{\epsilon}{2}$,
\item $\forall x\in X$, $F(x)\in co\{F(q_{U})|\, x\in U\in\gamma\}$,
\item \label{enu:linear-independence}If for some $0\leq l,j<N-4\Delta$
and $\lambda,\lambda'\in(0,1]$ and $x,y,x',y'\in X$ so that:
\end{enumerate}

\[
\lambda F(x)|_{l}^{l+4\Delta-1}+(1-\lambda)F(y)|_{l+1}^{l+4\Delta}=\lambda'F(x')|_{j}^{j+4\Delta-1}+(1-\lambda')F(y')|_{j+1}^{j+4\Delta}
\]
then there exist $U\in\gamma$ so that $x,x'\in U$ and $l=j$ (note the statement $l=j$ is missing from Lemma 5.6 of \cite{L99} but follows from the proof).

By Proposition \ref{prop:local marker property-->local strong Rokhlin},
$(X,T)$ has the local strong topological Rokhlin property. By Definition
\ref{def: local strong Rokhlin} one can find a continuous function
$n:\bigcup_{-4M}^{\frac{M}{2}-1}T^{i}(Z\cup W)\rightarrow\mathbb{R}$
so that for $E_{n}=\{x\in\bigcup_{-4M}^{\frac{M}{2}-2}T^{i}(Z\cup W)\,|\, n(Tx)\neq n(x)+1\}$
one has $E_{n}\cap T^{i}(E_{n})=\emptyset$ for $1\leq i\leq\frac{9}{2}M-1$.
Let $\underline{n}(x)=\lfloor n(x)\rfloor\,\mod M$, $\overline{n}(x)=\lceil n(x)\rceil\,\mod M$,
$n'(x)=\{n(x)\}$. Let $A=\bigcup_{-4M}^{\frac{M}{2}-1}T^{i}(Z\cup W)$.
Define:

\begin{equation}
f'(x)=(1-n'(x)F(T^{-\underline{n}(x)}x)|_{\underline{n}(x)}+n'(x)F(T^{-\overline{n}(x)}x)|_{\overline{n}(x)}\hspace{1em}x\in A\label{eq:f'}
\end{equation}
$f'$ is continuous by the argument appearing on p. 241 of \cite{L99}.
By the argument of Claim 1 on p. 241 of \cite{L99}, as $\max_{U\in\alpha}diam(\tilde{f}(U))<\frac{\epsilon}{2}$
and $\max_{U\in\gamma}||F(q_{U})-v_{U}||_{\infty}<\frac{\epsilon}{2}$
we have $||\tilde{f}_{|A}-f'||_{A,\infty}<\epsilon$. By Lemma A.5
of \cite{Gut12a} there is $f:X\rightarrow[0,1]^{d}$ so that $f{}_{|A}=f'_{|A}$
and $||f-\tilde{f}||_{\infty}<\epsilon$. We now show that $f\in D_{K}$.
Fix $x'\in Z$ and $y'\in W$. Assume for a contradiction $f(T^{a}x')=f(T^{a}y')$
for all $a\in\mathbb{Z}$. Notice that by Remark \ref{rem:at most one bad index}
for both $x',y'$ there is at most one index $-4M\leq j_{x'},j_{y'}\leq\frac{M}{2}-2$
for which $n(T^{j{}_{x'}+1}x')\neq n(T^{j{}_{x'}}x')+1$, $n(T^{j{}_{y'}+1}y')\neq n(T^{j{}_{y'}}y')+1$
respectively. By Lemma \ref{lem:Existence_simultaneous_good_segment}
one can find an index $-4M\leq r\leq0$ so that for $r\leq s\leq r+\frac{M}{2}-2$,
for $z'=x',y'$, $\underline{n}(T^{s}z')=\underline{n}(T^{r}z')+s-r$,
$\overline{n}(T^{s}z')=\overline{n}(T^{r}z')+s-r$ and $\underline{n}(T^{r}z')\leq\frac{M}{2}$.
Denote $\lambda=n'(T^{r}x')$, $\lambda'=n'(T^{r}y')$, $a=\underline{n}(T^{r}x')\leq\frac{M}{2}$
and $a'=\underline{n}(T^{r}y')$. Substituting $T^{s}x',T^{s}y'$
for $r\leq s\leq r+4\Delta-1=r+\frac{M}{2}-5$ in equation (\ref{eq:f'})
(note $T^{s}x',T^{s}y'\in A$), we conclude from the equality $I_{f}(x')|_{r}^{r+\frac{M}{2}-5}=I_{f}(y')|_{r}^{r+\frac{M}{2}-5}$:

\[
(1-\lambda)F(T^{r-a}x')|_{a}^{a+4\Delta-1}+\lambda F(T^{r-a-1}x')|_{a+1}^{a+4\Delta}=(1-\lambda')F(T^{r-a'}y')|_{a'}^{a'+4\Delta-1}+\lambda'F(T^{r-a'-1}y')|_{a'+1}^{a'+4\Delta}
\]
E.g. notice that for $0\leq i\leq\frac{M}{2}-5$ it holds that $T^{-\underline{n}(T^{r+i}x')}T^{r+i}x'=T^{-(a+i)+r+i}x'=T^{r-a}x'$.
As the conditions of Lemma 5.6 of \cite{L99} are fulfilled then
by condition (\ref{enu:linear-independence}), one has that $a=a'$
and that there exist $U\in\gamma\succ\alpha^{N+1}$ so that $T^{r-a}x',T^{r-a}y'\in U$.
As $N=-4M-\frac{M}{2}\leq r-a\leq0$ we can find $V\in\alpha$, so
that $x',y'\in V$. This is a contradiction to $\max_{U\in\alpha}diam(U)<dist(Z,W)=\delta$.

\end{proof}
\begin{prop}
\label{prop:D_K is dense (periodic case)}Assume $(X,T)$ has the
local strong topological Rokhlin property and let $n\in\mathbb{N}$,
then there is a countable closed cover $\mathcal{K}$ of $(X\setminus P)\times H_{n}$
so that for $K\in\mathcal{K}$, $D_{K}$ is dense in $C(X,[0,1]^{d})$.\end{prop}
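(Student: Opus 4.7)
The plan is to parallel the proof of Proposition \ref{lem:lstrp-->D_K dense}, adjusting for the fact that one factor now lies in $H_n$, the set of points of exact period $n$. To build the cover $\mathcal{K}$, I would first cover $H_n=P_n\setminus P_{n-1}$ by countably many closed (in $X$) subsets $\{W_j\}_{j\geq 1}$ of small diameter, each contained in $H_n$; this is possible since $H_n$ is open in the compact $P_n$, hence $F_\sigma$. For each such $W_j$ and each closed $Z\subset X\setminus P$ drawn from the local strong topological Rokhlin cover of $(X\times X)\setminus(\Delta\cup X\times P\cup P\times X)$, include in $\mathcal{K}$ those products $Z\times W_j$ with $Z\cap\bigcup_{k=0}^{n-1}T^k W_j=\emptyset$. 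A countable subfamily covers $(X\setminus P)\times H_n$.

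For density of $D_{Z\times W}$: given $\tilde f\in C(X,[0,1]^d)$ and $\epsilon>0$, I would construct $f$ within $\epsilon$ of $\tilde f$ in two stages. First, on the aperiodic side, proceed exactly as in Proposition \ref{lem:lstrp-->D_K dense}: invoke the local strong Rokhlin function $n$ associated to $Z$ to define $f'$ via the convex-combination formula (\ref{eq:f'}) on a neighborhood of $\bigcup_i T^i Z$, where $F:X\to([0,1]^d)^N$ is a Lindenstrauss block-map (Lemma~5.6 of \cite{L99}) with base vectors $F(q_U)$ for $U\in\gamma\succ\alpha^{N+1}$. Second, on the periodic side, use the $perdim$ hypothesis $\dim(P_n)<nd/2$ (in force in the enclosing Theorem~\ref{thm:Local Marker + P-Embedding implies 36 Embedding}) to perturb $\tilde f|_{P_n}$ within $\epsilon/2$ so that the orbit-profile map $\Phi:W\to([0,1]^d)^n$, $\Phi(y)=(f(y),f(Ty),\ldots,f(T^{n-1}y))$, together with its $n$ cyclic shifts periodically extended to length-$N$ blocks, yields a set $\tilde\Phi(W)\subset[0,1]^{Nd}$ of dimension $\leq\dim(W)<nd/2$. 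Simultaneously, choose the base vectors $F(q_U)$ within $\epsilon/2$ of $\tilde v_U=(\tilde f(T^iq_U))_{i=0}^{N-1}$ in general position so that every line segment between pairs of cyclic shifts of the $F(q_U)$ avoids $\tilde\Phi(W)$; this is possible because $\tilde\Phi(W)$ has codimension $>Nd/2$ in $[0,1]^{Nd}$ for $N$ large. Extend $f'$ to $f\in C(X,[0,1]^d)$ via Lemma~A.5 of \cite{Gut12a}.

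For the separation, assume for contradiction $f(T^ax')=f(T^ay')$ for all $a\in\Z$ and some $(x',y')\in Z\times W$. Then $I_f(x')$ is $n$-periodic, so every length-$N$ block $I_f(x')|_r^{r+N-1}$ belongs to $\tilde\Phi(W)$. Choosing a Rokhlin-regular index $r$ via Lemma~\ref{lem:Existence_simultaneous_good_segment}, formula~(\ref{eq:f'}) expresses this block as a convex combination of two shifts of $F(q_U)$ and $F(q_{U'})$, placing it on a line segment that by construction misses $\tilde\Phi(W)$, a contradiction. The main obstacle I foresee is the dimensional bookkeeping: $\tilde\Phi(W)$ has small dimension thanks to $perdim<d/2$, while the number of base vectors $|\gamma|$ is controlled via $\mathrm{ord}(\gamma)<\Delta d$ coming from $mdim<d/36$; one must verify that the quantitative balance between these bounds (mirroring the $1/36$ vs.\ $1/2$ split in the hypothesis of the enclosing theorem) permits the desired general-position avoidance within the $\epsilon$-budget, while also remaining compatible with the extension steps that glue together the aperiodic-side construction with the perturbation on $P_n$.
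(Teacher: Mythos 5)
Your proposal diverges from the paper's proof in a way that introduces genuine gaps, even though the cover construction and the Rokhlin/convex-combination skeleton are the same. The crucial observation you are missing is that since every $y'\in\overline{W}\subset H_n$ has period exactly $n$, the block $I_f(y')|_r^{r+2S-1}$ automatically lies in the fixed linear subspace $V_{2S}^n\subset([0,1]^d)^{2S}$ of $n$-periodic $2S$-blocks, which has dimension $nd$ independently of $f$, $W$, or $P_n$. The paper therefore does not perturb $\tilde f$ on $P_n$ at all and makes no use of the hypothesis $perdim<d/2$ in this proposition: it replaces Lindenstrauss's Lemma~5.6 of \cite{L99} by the variant Lemma~\ref{lem:F not in V_n}, which chooses the base vectors $F(q_U)$ in general position so that the convex combinations of shifted $F$-blocks avoid $V_{2S}^n$. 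The dimension count is $2(\mathrm{ord}(\gamma)+1)+nd\leq 2Sd$, arranged via $\mathrm{ord}(\gamma)+1<(S-\tfrac n2)d$, which follows from the mean-dimension bound alone.

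By contrast, your two-stage construction has several problems you would need to address. First, the set you want to avoid, $\tilde\Phi(W)$, depends on the values of $f$ on $P_n$, yet you also need to choose the $F(q_U)$ near the vectors $\tilde v_U$ coming from $\tilde f$; you do not explain how to run these two perturbations coherently, nor why perturbing $\tilde f$ on $P_n$ is compatible with gluing $f'$ (defined only on a neighborhood of $\bigcup_i T^i Z$) back to $\tilde f$ via Lemma~A.5 of \cite{Gut12a}. Second, the identity between $I_f(x')$ and a convex combination of shifted $F$-blocks only holds over a window of length about $M/2=2S$, not over a full length-$N$ block, so the claim that ``every length-$N$ block belongs to $\tilde\Phi(W)$'' is not the statement you actually get to compare against. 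Third, using $\tilde\Phi(W)$ forces the additional requirement $\dim(P_n)<nd/2$, i.e.\ $perdim<d/2$, which the paper shows is unnecessary here (it is used only in the cover $\mathcal{P}$ for $(P\times P)\setminus\Delta$ via Theorem~4.1 of \cite{Gut12a}). In short, the cure for the ``dimensional bookkeeping obstacle'' you flag is exactly to avoid $V_{2S}^n$ rather than $\tilde\Phi(W)$, at which point the second stage of your construction evaporates.
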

\begin{proof}
Let $\mathcal{C}$ be a cover of $(X\times X)\setminus(\triangle\cup(X\times P)\cup(P\times X))$
with the local strong topological Rokhlin property. Cover $H_{n}$
by a countable collection. Let $W$ be an open set in $H_{n}$ (not
necessarily open in $X$) with $y\in W\subset\overline{W}\subset H_{n}$.
Let $Z\times R\in\mathcal{C}$. Fix $\epsilon>0$. Let $\tilde{f}:X\rightarrow[0,1]^{d}$
be a continuous function. We will show that there exists a continuous
function $f:X\rightarrow[0,1]^{d}$ so that $\|f-\tilde{f}\|_{\infty}<\epsilon$
and $I_{f}$ is $K$-compatible for $K=Z\times\overline{W}$. Let
$\alpha$ be a cover of $X$ with $\max_{U\in\alpha}diam(\tilde{f}(U))<\min\{\frac{\epsilon}{2},d(Z,P_{n})\}$.
Let $\epsilon'>0$ be such that $36m_{dim}(1+2\epsilon')<d$. We will
see it is enough to assume $18m_{dim}(1+2\epsilon')<d$ (actually
it is enough to assume $8m_{dim}(1+2\epsilon')<d$ but we will not
use this fact). Let $N\in\N$, divisible by $18$, be such that it
holds $\frac{1}{N}D(\alpha^{N})<(1+\epsilon')m_{dim}$ and $N\epsilon'-9n(1+2\epsilon')>\frac{1}{m_{dim}}$.
Let $\gamma\succ\alpha^{N}$ be an open cover so that $D(\alpha^{N})=ord(\gamma)$.
Let $M=\frac{2}{9}N$ and $S=\frac{M}{4}$. Notice $(S-\frac{n}{2})d>(\frac{N}{18}-\frac{n}{2})18m_{dim}(1+2\epsilon')=Nm_{dim}(1+\epsilon')+m_{dim}(N\epsilon'-9n(1+2\epsilon'))>1+Nm_{dim}(1+\epsilon')$. As $ord(\gamma)<N(1+\epsilon')m_{dim}$, conclude:

\[
ord(\gamma)+1<(S-\frac{n}{2})d
\]
For each $U\in\gamma$ choose $q_{U}\in U$ so that $\{q_{U}\}_{U\in\gamma}$
is a collection of distinct points in $X$, and define $\tilde{v}_{U}=(\tilde{f}(T^{i}q_{U}))_{i=0}^{N-1}$.
According to Lemma \ref{lem:F not in V_n} one can find a continuous
function $F:X\rightarrow([0,1]^{d})^{N}$, with the following properties:
\begin{enumerate}
\item $\forall U\in\gamma$, $||F(q_{U})-\tilde{v}_{U}||_{\infty}<\frac{\epsilon}{2}$,
\item $\forall x\in X$, $F(x)\in co\{F(q_{U})|\, x\in U\in\gamma\}$,
\item \label{enu:not in V_n}For any $0\leq l,j<N-2S$, and $\lambda\in(0,1]$
and $x,y\in X$ with $d(x,y)>\max_{W\in\gamma}diam(W)$ it holds:
\end{enumerate}

\[
(1-\lambda)F(x)|_{l}^{l+2S-1}+\lambda F(y)|_{l+1}^{l+2S}\notin V_{2S}^{n}
\]
where,

\[
V_{2S}^{n}\triangleq\{w=(w_{0},\ldots,w_{2\Delta-1})\in([0,1]^{d})^{2\Delta}|\forall0\leq a,b\leq2S-1,a=b\,\mod n\rightarrow w_{a}=w_{b}\}.
\]

\noindent By Definition \ref{def: local strong Rokhlin} one can
find a continuous function $n:\bigcup_{-4M}^{\frac{M}{2}-1}T^{i}Z\rightarrow\mathbb{R}$
so that for $E_{n}=\{x\in\bigcup_{-4M}^{\frac{M}{2}-2}T^{i}Z\,|\, n(Tx)\neq n(x)+1\}$
one has $E_{n}\cap T^{i}(E_{n})=\emptyset$ for $1\leq i\leq\frac{9}{2}M-1$.
Let $\underline{n}(x)=\lfloor n(x)\rfloor\,\mod M$, $\overline{n}(x)=\lceil n(x)\rceil\,\mod M$,
$n'(x)=\{n(x)\}$. Let $A=\bigcup_{-4M}^{\frac{M}{2}-1}T^{i}Z$. Define:

\begin{equation}
f'(x)=(1-n'(x)F(T^{-\underline{n}(x)}x)|_{\underline{n}(x)}+n'(x)F(T^{-\overline{n}(x)}x)|_{\overline{n}(x)}\hspace{1em}x\in A\label{eq:f'-2}
\end{equation}
$f'$ is continuous by the argument appearing on p. 241 of \cite{L99}.
By the argument of Claim 1 on p. 241 of \cite{L99}, as $\max_{U\in\alpha}diam(\tilde{f}(U))<\frac{\epsilon}{2}$
and $\max_{U\in\gamma}||F(q_{U})-\tilde{v}_{U}||_{\infty}<\frac{\epsilon}{2}$
we have $||f'-\tilde{f}_{|A}||_{\infty}<\epsilon$. By Lemma A.5 of
\cite{Gut12a} there is $f:X\rightarrow[0,1]^{d}$ so that $f{}_{|A}=f'_{|A}$
and $||f-\tilde{f}||_{\infty}<\epsilon$. We now show that $f\in D_{K}$.
Fix $x'\in Z$ and $y'\in\overline{W}$. Assume for a contradiction
$f(T^{a}x')=f(T^{a}y')$ for all $a\in\mathbb{Z}$. Notice that by
Remark \ref{rem:at most one bad index} there is at most one index
$-4M\leq j_{x'}\leq\frac{M}{2}-2$ for which $n(T^{j{}_{x'}+1}x')\neq n(T^{j{}_{x'}}x')+1$.
By Lemma \ref{lem:Existence_simultaneous_good_segment} one can find
an index $-4M\leq r\leq0$ so that for $r\leq s\leq r+\frac{M}{2}-1$,
$\underline{n}(T^{s}x')=\underline{n}(T^{r}x')+s-r$ and $\overline{n}(T^{s}x')=\overline{n}(T^{r}x')+s-r$.
Denote $\lambda=n'(T^{r}x')$ and $a=(T^{r}x')$. Substituting $T^{s}x'$
for $r\leq s\leq r+2S-1=r+\frac{M}{2}-1$ in equation (\ref{eq:f'-2})
(note $T^{s}x'\in A$), we conclude from the equality $I_{f}(x')|_{r}^{r+\frac{M}{2}-1}=I_{f}(y')|_{r}^{r+\frac{M}{2}-1}$
(compare with the analogue part in the proof of Proposition \ref{lem:lstrp-->D_K dense}):

\[
(1-\lambda)F(T^{r-a}x')|_{a}^{a+2S-1}+\lambda F(T^{r-a-1}x')|_{a+1}^{a+2S}=I_{f}(y')|_{r}^{r+2S-1}
\]
As $y'\in\overline{W}\subset H_{n}$, one clearly has $I_{f}(y')|_{r}^{r+2S-1}\in V_{2S}^{n}$.
This is a contradiction to property (\ref{enu:not in V_n}).

\end{proof}

\section{Applications\label{sec:Applications}}
\begin{thm}
Assume $(X,T)$ is an extension of an aperiodic t.d.s which either
is finite-dimensional or has a countable number of minimal subsystems or has a compact minimal subsystems selector. Then $(X,T)$ has the strong Rokhlin property. If in addition $d\in\mathbb{N}$
is such that $mdim(X,T)<\frac{d}{36}$, then the collection of continuous
functions $f:X\rightarrow[0,1]^{d}$ so that $I_{f}:(X,T)\hookrightarrow(([0,1]^{d})^{\mathbb{Z}},shift)$
is an embedding is comeagre in $C(X,[0,1]^{d})$.\end{thm}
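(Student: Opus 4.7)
The plan is to reduce both conclusions to results already established in the paper. For the strong Rokhlin property, I would first argue that the aperiodic factor $(Y,S)$ itself has the marker property under each of the three hypotheses: if $Y$ is finite-dimensional, Theorem 6.1 of \cite{Gut12a} (cited in the proof of Theorem~\ref{thm:finite dimensional and P closed implies local marker property}) supplies the marker property; if $Y$ has countably many minimal subsystems, apply Theorem~\ref{thm:countable number of minimal systems-->Marker Property}; if $Y$ admits a compact minimal subsystems selector, apply Theorem~\ref{thm:compact minimal selector--->Marker Property}. In all three cases the marker property passes up the extension by Remark~\ref{Rem: marker property stable under extension}, so $(X,T)$ itself has the marker property. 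Then Theorem~\ref{thm:Marker Property iff Strong Rokhlin} immediately yields the (global) strong topological Rokhlin property.

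For the embedding statement, the key observation is that an extension of an aperiodic system is itself aperiodic: any periodic point in $X$ would project to a periodic point in the base. Hence $P(X,T)=\emptyset$, which forces $\dim(P_{m})=\dim(\emptyset)=-\infty$ (or, using the usual convention, $0$) for every $m$, so that $\overrightarrow{perdim}(X,T)|_{m}=0$ for all $m$ and in particular $perdim(X,T)<\tfrac{d}{2}$ holds vacuously.

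With the marker property in hand, Remark~\ref{Rem: Implications of local marker property} gives the local marker property for $(X,T)$. The hypotheses $mdim(X,T)<\tfrac{d}{36}$ and $perdim(X,T)<\tfrac{d}{2}$ are now both in force, so Theorem~\ref{thm:Local Marker + P-Embedding implies 36 Embedding} applies and produces the desired comeagre set of $f\in C(X,[0,1]^{d})$ whose orbit maps $I_{f}$ are embeddings into $(([0,1]^{d})^{\mathbb{Z}},\mathit{shift})$.

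I do not anticipate a genuinely hard step here, since all technical work (the construction of markers in each of the three settings, the equivalence between the marker property and the strong Rokhlin property, and the Baire-category embedding argument) has already been carried out earlier in the paper. The only point requiring a sentence of justification is the aperiodicity of $(X,T)$ itself, and hence the vanishing of the periodic dimension, which turns the embedding theorem's second hypothesis into a triviality in this setting.
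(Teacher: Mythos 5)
Your proposal is correct and follows essentially the same route as the paper: establish the marker property (via Theorems~\ref{thm:countable number of minimal systems-->Marker Property}, \ref{thm:compact minimal selector--->Marker Property}, and Theorem 6.1 of \cite{Gut12a}), invoke Theorem~\ref{thm:Marker Property iff Strong Rokhlin} to get the strong Rokhlin property, and then observe that aperiodicity of $(X,T)$ makes the $perdim$ hypothesis of Theorem~\ref{thm:Local Marker + P-Embedding implies 36 Embedding} vacuous. The only cosmetic difference is that you pass the marker property up through the extension via Remark~\ref{Rem: marker property stable under extension}, whereas Theorems~\ref{thm:countable number of minimal systems-->Marker Property} and \ref{thm:compact minimal selector--->Marker Property} are already phrased directly for extensions, so that step is needed only in the finite-dimensional case.
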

\begin{proof}
In those cases, by Theorems \ref{thm:countable number of minimal systems-->Marker Property},
\ref{thm:compact minimal selector--->Marker Property} as well as
Theorem 6.1 of \cite{Gut12a}, $(X,T)$ has the marker property. We
can therefore conclude by Theorem \ref{thm:Marker Property iff Strong Rokhlin}
that $(X,T)$ has the strong Rokhlin property. By Theorem \ref{thm:Local Marker + P-Embedding implies 36 Embedding}
, as $(X,T)$ is aperiodic the second part of the theorem holds. \end{proof}
\begin{lem}
\label{lem:mdim(X,T)=00003Dmdim(Omega(X),T)}$mdim(X,T)=mdim(\Omega(X),T)$\end{lem}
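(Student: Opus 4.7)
The plan is to establish the two inequalities separately. One direction, $mdim(\Omega(X),T)\leq mdim(X,T)$, is the standard monotonicity of mean dimension under passage to closed invariant subsystems: given any open cover $\beta$ of $\Omega(X)$, each element extends to an open subset of $X$, and adjoining $X\setminus\Omega(X)$ produces an open cover $\alpha$ of $X$ with $\alpha|_{\Omega(X)}\succ\beta$. Any refinement of $\alpha^n$, restricted to $\Omega(X)$, is then a refinement of $\beta^n$ of no larger order, so $D(\beta^n)\leq D(\alpha^n)$; dividing by $n$ and taking suprema over $\beta$ yields the bound.

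For the reverse inequality $mdim(X,T)\leq mdim(\Omega(X),T)$ the goal is to show that for every open cover $\alpha$ of $X$,
\[
\limsup_{n\to\infty}\frac{D(\alpha^n)}{n}\leq mdim(\Omega(X),T).
\]
The key dynamical input is that $X\setminus\Omega(X)$ is uniformly wandering on compact pieces: every $x\notin\Omega(X)$ admits an open neighborhood $V_x$ with $T^{i}V_x\cap V_x=\emptyset$ for all $i\neq 0$ (the forward case is the definition of non-wandering, the backward case follows by applying $T^{-i}$). Consequently, any compact $K\subset X\setminus\Omega(X)$ has a finite cover $V_1,\ldots,V_q$ by such wandering sets, and a pigeonhole argument (two visits $T^{i_1}x,T^{i_2}x\in V_j$ with $i_1<i_2$ would force $V_j\cap T^{i_2-i_1}V_j\neq\emptyset$) yields the uniform bound $\sum_{i=0}^{n-1}\mathbf{1}_K(T^{i}x)\leq q$ for all $x\in X$ and $n\in\N$.

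With this in hand, for each $n$ I would build a refinement of $\alpha^n$ in two layers. On a suitably chosen open neighborhood $U$ of $\Omega(X)$, take an optimal refinement $\gamma_n$ of $(\alpha|_{\Omega(X)})^{n}$ satisfying $ord(\gamma_n)=D((\alpha|_{\Omega(X)})^{n})$, and extend it to an open cover $\tilde\gamma_n$ of $U$ of the same order whose members still lie inside the corresponding elements of $\alpha^n$ (possible by intersecting the extensions with the appropriate members of $\alpha^n$ and shrinking $U$). On the compact remainder $K=X\setminus U$, use the finite wandering cover $\{V_j\}$ of $K$ together with the uniform visit bound to build an open refinement of $\alpha^n|_K$ whose order is bounded by a constant $C$ depending only on $\alpha$ and on $\{V_j\}$, not on $n$. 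Combining the two layers and arranging so that their overlap adds only a bounded contribution to the order gives $D(\alpha^n)\leq D((\alpha|_{\Omega(X)})^{n})+C'$ with $C'$ independent of $n$; dividing by $n$ and taking the limit produces the required inequality.

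The main obstacle is the construction on $K$: turning a wandering cover of $K$ into an $\alpha^n$-refinement of order bounded independently of $n$. The wandering structure restricts every orbit segment of length $n$ to interact with $K$ in at most $q$ coordinates, so the combinatorial variety of itineraries through $K$ is controlled by the fixed data $(\alpha,\{V_j\})$ rather than by $n$; this is the feature that must be exploited to keep the $K$-contribution uniformly bounded. The step parallels Bowen's classical reduction $h_{top}(X,T)=h_{top}(\Omega(X),T)$ transposed from separated sets to open covers and refinements.
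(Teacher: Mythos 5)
Your first inequality, $mdim(\Omega(X),T)\leq mdim(X,T)$, is correct and matches the paper's (trivial) direction. For the reverse inequality, however, you take a genuinely different route from the paper, and the route as sketched has a gap. The paper collapses $\Omega(X)$ to a point, obtaining a factor $\pi\colon(X,T)\to(Y,T')$ with $Y=X/\Omega(X)$; since $\pi$ is injective off $\Omega(X)$ it invokes Tsukamoto's inequality $mdim(X,T)\leq mdim(Y,T')+mdim(\Omega(X),T)$, and then observes that $\{\bullet\}=\pi(\Omega(X))$ is the only minimal subsystem of $Y$, so $h_{top}(Y,T')=0$ (variational principle) and hence $mdim(Y,T')=0$. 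This is short and outsources all the combinatorics to two known black boxes.

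Your two-layer refinement argument contains an unresolved circularity in the quantifiers. To extend an optimal refinement $\gamma_n$ of $(\alpha|_{\Omega(X)})^n$ to an open cover of a neighborhood $U$ of $\Omega(X)$ that still refines $\alpha^n$, you must (as you yourself note) shrink $U$; but $\gamma_n$ depends on $n$, so the admissible neighborhood $U=U_n$ shrinks to $\Omega(X)$ as $n\to\infty$. Consequently $K_n=X\setminus U_n$ grows, the number $q_n$ of wandering sets needed to cover it grows, and the ``constant $C$ depending only on $\alpha$ and $\{V_j\}$'' in your bound $D(\alpha^n)\leq D\bigl((\alpha|_{\Omega(X)})^n\bigr)+C$ is in fact $n$-dependent. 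Nothing in the sketch controls this growth, and without such control dividing by $n$ does not yield the desired inequality. There is also a second, more structural worry: the observation that each orbit segment of length $n$ meets a fixed compact $K\subset X\setminus\Omega(X)$ at most $q$ times bounds the \emph{number of itineraries} through $K$, which is exactly what one needs for an entropy estimate in the spirit of Bowen; but mean dimension is governed by the \emph{order} of a refining cover, a dimension-theoretic quantity, and when $X\setminus\Omega(X)$ is infinite-dimensional a polynomial itinerary count does not by itself produce a refinement of $\alpha^n|_K$ of uniformly bounded order. Resolving both of these would essentially amount to re-proving a version of Tsukamoto's factor inequality from scratch; the paper's approach sidesteps this entirely.
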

\begin{proof}
Clearly $mdim(X,T)\geq mdim(\Omega(X),T)$. To see the reversed inequality
let $Y=X/\Omega(X)$ (i.e. the quotient space where the closed and
$T$-invariant subspace $\Omega(X)$ is identified with a point) and
let $\pi:(X,T)\rightarrow(Y,T')$ be the quotient map, where $T'$
is the induced transformation. Note that $\pi_{X\setminus\Omega(X)}$
is injective. We can therefore use Theorem 4.6 of \cite{Tsu08} in
order to conclude $mdim(X,T)\leq mdim(Y,T')+mdim(\Omega(X),T)$. As
$\pi(\Omega(X))\simeq\{\bullet\}$ is the only closed invariant subsystem
of $Y$, it holds $h_{top}(Y,T')=0$ which implies $mdim(Y,T')=0$
(see Subsection \ref{sub:The-Metric-Mean}). We therefore conclude
$mdim(X,T)\leq mdim(\Omega(X),T)$ as desired. \end{proof}
\begin{thm}
\label{thm:Omega finite dimensional-->Embedding}Let $(X,T)$ be a
t.d.s so that $\Omega(X)$ is finite dimensional, the set of periodic points $P(X,T)$ is closed
and $perdim(X,T)<\frac{d}{2}$. Then the collection of continuous
functions $f:X\rightarrow[0,1]^{d}$ so that $I_{f}:(X,T)\hookrightarrow(([0,1]^{d})^{\mathbb{Z}},shift)$
is an embedding is comeagre in $C(X,[0,1]^{d})$.\end{thm}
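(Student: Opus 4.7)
The plan is to reduce the theorem to an application of Theorem \ref{thm:Local Marker + P-Embedding implies 36 Embedding} by establishing two things: (i) that $(X,T)$ has the local marker property, and (ii) that $mdim(X,T) < d/36$ (in fact, that $mdim(X,T) = 0$). The hypothesis $perdim(X,T) < d/2$ is already available.

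First, I would verify that $(\Omega(X),T)$ satisfies the hypotheses of Theorem \ref{thm:finite dimensional and P closed implies local marker property}. Since $\Omega(X)$ is finite-dimensional by assumption, it remains to check that the periodic points of $(\Omega(X),T)$ form a closed subset of $\Omega(X)$. But $P(\Omega(X),T) = P(X,T)$, since every periodic point is non-wandering; and $P(X,T)$ is closed in $X$ by assumption, hence closed in $\Omega(X)$. Therefore $(\Omega(X),T)$ has the local marker property. Now invoking Proposition \ref{prop:If Omega has local marker property then X has the local marker property}, the system $(X,T)$ itself has the local marker property.

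Next, I would compute $mdim(X,T)$. By Lemma \ref{lem:mdim(X,T)=00003Dmdim(Omega(X),T)} we have $mdim(X,T) = mdim(\Omega(X),T)$. Since $\dim(\Omega(X)) < \infty$, the standard bound $D(\alpha^n) \leq \dim(\Omega(X))$ for any open cover $\alpha$ of $\Omega(X)$ gives
\[
mdim(\Omega(X),T) = \sup_{\alpha}\lim_{n\to\infty}\frac{D(\alpha^n)}{n} \leq \lim_{n\to\infty}\frac{\dim(\Omega(X))}{n} = 0.
\]
Hence $mdim(X,T) = 0 < d/36$.

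Finally, with $(X,T)$ having the local marker property, $mdim(X,T) < d/36$, and $perdim(X,T) < d/2$, Theorem \ref{thm:Local Marker + P-Embedding implies 36 Embedding} immediately yields that the set of $f \in C(X,[0,1]^d)$ for which $I_f:(X,T)\hookrightarrow(([0,1]^{d})^{\mathbb{Z}},shift)$ is an embedding is comeagre, as desired. There is no substantive obstacle beyond checking that the pieces fit together; the real content of the theorem has been absorbed into the earlier structural results (the local marker property for finite-dimensional $\Omega$ with closed $P$, its propagation from $\Omega$ to $X$, and the general embedding theorem).
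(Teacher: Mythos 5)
Your proof is correct and follows the same route as the paper: invoke Theorem \ref{thm:finite dimensional and P closed implies local marker property} on $(\Omega(X),T)$, propagate the local marker property to $(X,T)$ via Proposition \ref{prop:If Omega has local marker property then X has the local marker property}, use Lemma \ref{lem:mdim(X,T)=00003Dmdim(Omega(X),T)} together with finite-dimensionality of $\Omega(X)$ to get $mdim(X,T)=0$, and then apply Theorem \ref{thm:Local Marker + P-Embedding implies 36 Embedding}. You merely spell out two small points the paper leaves implicit (that $P(\Omega(X),T)=P(X,T)$ is closed, and the elementary bound $D(\alpha^n)\leq\dim\Omega(X)$), which is fine.
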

\begin{proof}
By Lemma \ref{lem:mdim(X,T)=00003Dmdim(Omega(X),T)} as $\Omega(X)$
is finite dimensional, $mdim(X,T)=mdim(\Omega(X),T)=0$. By Theorem
\ref{thm:finite dimensional and P closed implies local marker property}
$(\Omega(X),T)$ has the local marker property. By Proposition \ref{prop:If Omega has local marker property then X has the local marker property}
$(X,T)$ has the local marker property. Combining all of these facts,
we conclude by Theorem \ref{thm:Local Marker + P-Embedding implies 36 Embedding}
that the collection of continuous functions $f:X\rightarrow[0,1]^{d}$
so that $I_{f}:(X,T)\hookrightarrow(([0,1]^{d})^{\mathbb{Z}},shift)$
is an embedding is comeagre in $C(X,[0,1]^{d})$.
\end{proof}
Recall the Lindenstrauss-Tsukamoto Conjecture from the Introduction.
\begin{cor}
Let $(X,T)$ be a t.d.s so that $\Omega(X)$ is finite dimensional
and the set of periodic points $P(X,T)$ is closed, then the Lindenstrauss-Tsukamoto Conjecture
holds for $(X,T)$.\end{cor}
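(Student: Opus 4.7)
The plan is to unfold the statement of the Lindenstrauss--Tsukamoto Conjecture and then simply verify that, under the hypotheses of the corollary, the premises of Theorem~\ref{thm:Omega finite dimensional-->Embedding} are already met. All of the substantive analytical work has been packaged into that theorem; the corollary is a repackaging in the language of the conjecture.

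Concretely, I would proceed as follows. Recall from the Introduction that the Lindenstrauss--Tsukamoto Conjecture for a fixed $d \in \N$ asserts that $mdim(X,T) < \tfrac{d}{2}$ together with $perdim(X,T) < \tfrac{d}{2}$ imply the existence of an embedding $(X,T) \hookrightarrow (([0,1]^d)^{\Z}, \textit{shift})$. Fix such a $d$. The case $d=0$ is vacuous: both $mdim(X,T) \geq 0$ and every coordinate of $\overrightarrow{perdim}(X,T)$ is $\geq 0$, so the conjectural hypotheses cannot hold. Thus assume $d \geq 1$, so in particular $\tfrac{d}{2} > 0$.

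Next I would match hypotheses to Theorem~\ref{thm:Omega finite dimensional-->Embedding}, which requires (a) $\Omega(X)$ finite dimensional, (b) $P(X,T)$ closed, and (c) $perdim(X,T) < \tfrac{d}{2}$. Conditions (a) and (b) are the standing assumptions of the corollary, while (c) is one of the two hypotheses supplied by the Lindenstrauss--Tsukamoto Conjecture itself. Hence Theorem~\ref{thm:Omega finite dimensional-->Embedding} applies directly and yields a comeagre family (in particular, a non-empty family) of embeddings $I_f:(X,T) \hookrightarrow (([0,1]^d)^{\Z}, \textit{shift})$, which is the conclusion asserted by the conjecture.

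It is worth observing that the $mdim(X,T) < \tfrac{d}{2}$ half of the conjectural hypothesis is not actually used in this deduction. Indeed, by Lemma~\ref{lem:mdim(X,T)=00003Dmdim(Omega(X),T)} together with the fact that finite covering dimension implies vanishing mean dimension (via Subsection~\ref{sub:The-Metric-Mean} and the standard bound $mdim \leq \dim$), the assumption that $\Omega(X)$ is finite dimensional already forces $mdim(X,T) = mdim(\Omega(X),T) = 0$. Consequently there is no obstacle to overcome in this argument: the corollary is a one-line invocation of Theorem~\ref{thm:Omega finite dimensional-->Embedding}, and the only thing to be careful about is the trivial $d=0$ case.
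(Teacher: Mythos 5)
Your proposal is correct and takes the same route as the paper: the corollary follows directly from Theorem~\ref{thm:Omega finite dimensional-->Embedding}, whose hypotheses are exactly the standing assumptions plus the conjectural bound $perdim(X,T)<\tfrac{d}{2}$, and the $mdim(X,T)<\tfrac{d}{2}$ hypothesis is automatic since $mdim(X,T)=mdim(\Omega(X),T)=0$. One small nitpick: the implication ``$\dim(\Omega(X))<\infty \Rightarrow mdim(\Omega(X),T)=0$'' is justified by the bound $mdim\leq\dim$ (which you also cite), not by Subsection~\ref{sub:The-Metric-Mean}, which only gives the weaker ``finite entropy $\Rightarrow$ zero mean dimension.''
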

\begin{proof}
It is sufficient to notice that $mdim(X,T)=0$ (as pointed out in
the proof of Theorem \ref{thm:Omega finite dimensional-->Embedding})
and apply Theorem \ref{thm:Omega finite dimensional-->Embedding}.
\end{proof}

\begin{example}
We now construct a family of examples for which the previous theorem is
applicable. Let $R:[0,1]\rightarrow[0,1]$ be a continuous invertible
map such that $R(0)=0$, $R(1)=1$ and such there are no other fixed points. It easily follows that for all $0<x<1$, $\lim_{n\rightarrow\infty}R^{n}(x)=1$,
$\lim_{n\rightarrow-\infty}R^{n}(x)=0$ or $\lim_{n\rightarrow\infty}R^{n}(x)=0$,
$\lim_{n\rightarrow-\infty}R^{n}(x)=1$, e.g. $R(x)=\sqrt{x},x^2$. Let
$Q=[0,1]^{\mathbb{N}}$, be the Hilbert cube, equipped with the product
topology. Define $\mathbf{R}:Q\rightarrow Q$, by $\mathbf{R}((x_{i})_{i=1}^{\infty})~=~(R(x_{i}))_{i=1}^{\infty}$.
It is easy to see $\Omega(Q,\mathbf{R})={\{0,1\}^{\mathbb{N}}}$.
Let $(Y,S)$ be a finite dimensional t.d.s with a closed set of periodic
points. It follows easily that $\Omega(Y\times Q,S\times\mathbf{R})=\Omega(Y,S)\times\Omega(Q,\mathbf{R})=\Omega(Y,S)\times{\{0,1\}^{\mathbb{N}}}$.
As $\{0,1\}^{\mathbb{N}}$ is zero-dimensional and $\Omega(Y,S)\subset Y$,
we conclude $\Omega(Y\times Q,S\times\mathbf{R})$ is finite-dimensional.
Moreover $P(Y\times Q,S\times\mathbf{R})=P(Y,S)\times{\{0,1\}^{\mathbb{N}}}$,
which is closed. We have thus verified all prerequisites that enable
us to apply the previous theorem for the infinite-dimensional system $(Y\times Q,S\times\mathbf{R})$.
Additionally notice that as $\{0,1\}^{\mathbb{N}}$ consists of fixed
points of $(Q,\mathbf{R})$ and is zero-dimensional, $\overrightarrow{perdim}(Y\times Q,S\times\mathbf{R})=\overrightarrow{perdim}(Y,S)$. \end{example}

\section{The Equivalence of SBP and Vanishing Mean Dimension Under the Marker
Property\label{sec:Equivalence-SBP-Mdim=00003D0}}

Recall the definition of $mdim_{d}(X,T)$ in Subsection \ref{sub:The-Metric-Mean}.
\begin{thm}
\label{thm:Metric Mean Diemension}If $(X,T)$ has the marker property
then there is a compatible metric $d'$ such that $mdim(X,T)=mdim_{d'}(X,T)$.\end{thm}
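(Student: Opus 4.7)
The inequality $mdim_{d'}(X,T)\leq mdim(X,T)$ holds for every compatible metric $d'$ by the Lindenstrauss--Weiss result recalled in Subsection \ref{sub:The-Metric-Mean}, so the content of the theorem is to exhibit a single compatible $d'$ for which the reverse inequality also holds. My plan is to follow the pattern of Lindenstrauss's proof of his metric-mean-dimension theorem (Theorem 4.3 of \cite{L99}), with the marker hypothesis replacing the extension-of-a-minimal-system hypothesis used there. This substitution is legitimate because, by Theorem \ref{thm:Marker Property iff Strong Rokhlin}, the marker property implies the (global) strong topological Rokhlin property, which was the only dynamical input to Lindenstrauss's construction.

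For each $k\in\mathbb{N}$ I would build a continuous map $f_{k}\colon X\to[0,1]^{D_{k}}$ whose orbit map $I_{f_{k}}(x)=(f_{k}(T^{i}x))_{i\in\mathbb{Z}}$ already realizes roughly a mean-dimensional amount of complexity at the resolution $2^{-k}$. The construction is exactly the interpolation along Rokhlin towers that underlies the proof of Proposition \ref{lem:lstrp-->D_K dense}: fix a cover $\alpha$ with diameter much smaller than $2^{-k}$, take $N$ large and divisible by a fixed integer, pick a refinement $\gamma\succ\alpha^{N}$ realizing $D(\alpha^{N})=ord(\gamma)$, and use Theorem \ref{thm:Marker Property iff Strong Rokhlin} together with Definition \ref{def: local strong Rokhlin} to obtain a globally-defined $n_{k}\colon X\to\mathbb{R}$ providing a Rokhlin-type partition of height $M\sim N$. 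Interpolating the Lindenstrauss vectors $F(q_{U})\in([0,1]^{D_{k}})^{N}$ produced by Lemma 5.6 of \cite{L99} along this tower, as in equation (\ref{eq:f'}), defines $f_{k}$; one chooses $D_{k}$ just large enough that Lemma 5.6 is applicable, so that $D_{k}/N$ is pushed essentially down to $mdim(X,T)$ as $k\to\infty$. Then set
\[
d'(x,y)=d(x,y)+\sum_{k=1}^{\infty}2^{-k}\|f_{k}(x)-f_{k}(y)\|_{\infty},
\]
which is a compatible metric since each $f_{k}$ is continuous and the sum converges uniformly on $X\times X$.

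For the lower bound $mdim_{d'}(X,T)\geq mdim(X,T)$, I would fix scales $\epsilon_{k}=c\cdot 2^{-k}$ and observe that any $(n,\epsilon_{k},d')$-spanning set $S$ pushes forward under $I_{f_{k}}$ to a $(n,\tfrac{1}{2},\|\cdot\|_{\infty})$-spanning set of $I_{f_{k}}(X)|_{0}^{n-1}\subset([0,1]^{D_{k}})^{n}$. The linear-independence property (\ref{enu:linear-independence}) of Lemma 5.6, combined with the structure of the tower interpolation, forces this orbit image to have covering dimension at least $n\bigl(mdim(X,T)-1/k\bigr)$ whenever $n$ is a suitable multiple of $M$, so that $\log|S|\geq n(mdim(X,T)-1/k)|\log\epsilon_{k}|$; dividing by $n|\log\epsilon_{k}|$, then taking $\limsup$ in $n$ and $\liminf$ in $\epsilon$, yields $mdim_{d'}(X,T)\geq mdim(X,T)-1/k$ for every $k$. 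The main obstacle is propagating the single-window separation statement of Lemma 5.6, which controls only blocks of length $4\Delta<N$, to a covering-dimension bound on the orbit image for windows of length $n\gg M$, and doing so uniformly in all small $\epsilon$ (not just along the subsequence $\epsilon_{k}$) so that the $\liminf$ defining $mdim_{d'}$ really captures the lower bound; this requires the same careful bookkeeping across disjoint vertical strata of the Rokhlin tower that Lindenstrauss carries out in Section 4 of \cite{L99}, and is precisely the step that forces us to work with the global rather than the local strong Rokhlin property.
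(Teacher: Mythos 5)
Your proposal takes essentially the same route as the paper: the paper's proof is a single sentence, citing Theorem 4.3 of \cite{L99} as a straightforward generalization with the marker property (via Theorem \ref{thm:Marker Property iff Strong Rokhlin}) replacing the aperiodic-minimal-factor hypothesis, which is exactly the substitution you identify as legitimate. Your detailed sketch of the tower construction and the spanning-set counting goes beyond what the paper records, but it is an expansion of, not a departure from, the paper's argument.
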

\begin{proof}
This is a straightforward generalization of Theorem 4.3 of \cite{L99},
which is the statement that the conclusion of the theorem holds if
the system has an aperiodic minimal factor.
\end{proof}
As a corollary of the previous theorem we have the following theorem:
\begin{thm}
Assume $(X,T)$ is an extension of an aperiodic t.d.s which either
is finite dimensional or has a countable number of minimal subsystems
or has a compact minimal subsystems selector then there is
a compatible metric $d'$ such that $mdim(X,T)=mdim_{d'}(X,T)$.
\end{thm}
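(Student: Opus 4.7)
The plan is straightforward: this theorem is essentially a corollary obtained by chaining together results already established in the paper. First I would observe that under each of the three hypotheses, the base aperiodic system has the marker property. Indeed, if it is finite dimensional and aperiodic, Theorem 6.1 of \cite{Gut12a} (quoted in the discussion preceding Problem \ref{Ques: Aperiodic-->Marker?}) applies; if it has a countable number of minimal subsystems, Theorem \ref{thm:countable number of minimal systems-->Marker Property} applies; and if it admits a compact minimal subsystems selector, Theorem \ref{thm:compact minimal selector--->Marker Property} applies.

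Next, by Remark \ref{Rem: marker property stable under extension}, the marker property is stable under extensions, so the fact that $(X,T)$ is an extension of such a system is enough to conclude that $(X,T)$ itself has the marker property. At this point I can directly invoke Theorem \ref{thm:Metric Mean Diemension}, which asserts that any system with the marker property admits a compatible metric $d'$ satisfying $mdim(X,T)=mdim_{d'}(X,T)$. This yields the desired conclusion.

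There is no real obstacle here; the work has been done in the cited theorems, and the role of this final statement is simply to combine the marker-property-establishing theorems with the metric mean dimension theorem to produce a single clean statement that can be applied in practice without the user having to know what the marker property is. The only point worth being careful about is invoking Remark \ref{Rem: marker property stable under extension} explicitly, since in each of the three cases the hypothesis is placed on the factor rather than on $(X,T)$ itself.
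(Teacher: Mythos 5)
Your proposal is correct and matches the paper's intended route: the paper states this as a corollary of Theorem \ref{thm:Metric Mean Diemension}, and the proof is exactly the chain you describe, paralleling the proof of the analogous embedding theorem in Section \ref{sec:Applications}. One small simplification: Theorems \ref{thm:countable number of minimal systems-->Marker Property} and \ref{thm:compact minimal selector--->Marker Property} are already phrased for extensions of such systems, so the explicit appeal to Remark \ref{Rem: marker property stable under extension} is only strictly needed for the finite-dimensional case (where one cites Theorem 6.1 of \cite{Gut12a} for the aperiodic finite-dimensional factor and then lifts).
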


\begin{thm}
\label{thm:SBP iff Mdim=00003D0} If $(X,T)$ has the marker property
then the following conditions are equivalent:

(a) $mdim(X,T)=0$

(b) $(X,T)$ has the small boundary property (SBP)

(c)$(X,T)=\lim_{\substack{\longleftarrow\\
i\in\mathbb{N}
}
}(X_{i},T_{i})$ where $h_{top}(X_{i},T_{i})<\infty$ for $i\in\mathbb{N}$.\end{thm}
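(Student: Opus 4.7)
The plan is to follow the scheme of \cite{L99} with the existence of an aperiodic minimal factor replaced by the (global) strong topological Rokhlin property, which is available here thanks to Theorem \ref{thm:Marker Property iff Strong Rokhlin}. The implication (b) $\Rightarrow$ (a) is exactly the theorem of \cite{LW} recalled in Subsection \ref{sub:The-Small-Boundary} and requires no marker hypothesis. For (c) $\Rightarrow$ (a), finite topological entropy forces $mdim=0$ (see Subsection \ref{sub:The-Metric-Mean}), and mean dimension is stable under inverse limits in the sense that $mdim(X,T)=\sup_i mdim(X_i,T_i)$, so $mdim(X,T)=0$.

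For (b) $\Rightarrow$ (c), SBP yields a refining sequence of finite open covers $\{\beta_n\}$ with $\mathrm{mesh}(\beta_n)\to 0$ and $ocap(\partial U)=0$ for every $U\in\beta_n$. Each $\beta_n$ induces a continuous factor $\pi_n:(X,T)\to (X_n,T_n)$ of topological entropy at most $\log|\beta_n|<\infty$, constructed as in \cite{LW}; since $\{\beta_n\}$ generates the topology of $X$, one has $(X,T)=\lim_{\leftarrow}(X_n,T_n)$.

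The substantive direction is (a) $\Rightarrow$ (b), which will generalize Theorem 6.2 of \cite{L99}. Fix $x_0\in X$ and $\epsilon>0$; the task is to produce an open neighborhood $U$ of $x_0$ of diameter less than $\epsilon$ with $ocap(\partial U)=0$. Given $\eta>0$, the assumption $mdim(X,T)=0$ supplies a fine open cover $\alpha$ with $\max_{V\in\alpha}diam(V)<\epsilon/2$ and an integer $N$ with $D(\alpha^N)<\eta N$. Theorem \ref{thm:Marker Property iff Strong Rokhlin}, applied with tower height $M$, yields a continuous function $f_M:X\to\mathbb{R}$ whose exceptional set $E_{f_M}$ has pairwise disjoint preimages $T^{-i}E_{f_M}$ for $0\leq i<M$, whence $ocap(E_{f_M})\leq 1/M$. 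Using $f_M$ to index a Kakutani--Rokhlin-type decomposition of $X$ whose base is refined by $\alpha^N$, and then running the local construction of \cite{L99} in each column, one obtains an open $U\ni x_0$ of diameter less than $\epsilon$ whose topological boundary is contained in $E_{f_M}$ together with a set of orbit capacity at most $D(\alpha^N)/M\leq \eta N/M$. Letting first $M\to\infty$ and then $\eta\to 0$ gives $ocap(\partial U)=0$, establishing SBP.

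The principal obstacle is executing (a) $\Rightarrow$ (b) faithfully: Lindenstrauss's construction is delicate, but the only dynamical input used there was the availability of arbitrarily long Rokhlin towers with a continuous indexing, arising in \cite{L99} from the aperiodic minimal factor. This input is now supplied \emph{verbatim} by the strong topological Rokhlin property, so the remaining combinatorial and geometric steps of \cite{L99} transplant without modification.
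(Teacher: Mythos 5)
Your overall scheme agrees with the paper's on the two easy directions and on the substantive implication $(a)\Rightarrow(b)$, which the paper, like you, handles by generalizing Theorem 6.2 of \cite{L99} using Theorem \ref{thm:Marker Property iff Strong Rokhlin} to replace the aperiodic minimal factor by the strong topological Rokhlin property. Your final paragraph is a fair description of why that generalization is routine, and this is exactly what the paper asserts.

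The place where you diverge from the paper, and where there is a genuine gap, is the route to $(c)$. The paper proves $(a)\Rightarrow(c)$ by generalizing Proposition 6.14 of \cite{L99}, again \emph{using the strong Rokhlin property}. You instead attempt $(b)\Rightarrow(c)$ directly, claiming that each open cover $\beta_n$ with small boundaries ``induces a continuous factor $\pi_n:(X,T)\to(X_n,T_n)$ of topological entropy at most $\log|\beta_n|$, constructed as in \cite{LW}.'' I do not believe this step is available. A finite open cover, even one whose elements have small boundaries, does not naturally induce a topological factor: the symbolic coding $x\mapsto(\mathcal{P}(T^i x))_i$ associated to a small-boundary partition $\mathcal{P}$ is discontinuous on $\bigcup_i T^{-i}\partial\mathcal{P}$, while the continuous orbit map coming from a subordinate partition of unity lands in a subshift of a cubical shift whose entropy need not be finite. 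It is precisely to repair this discontinuity, trading it for a small loss of entropy governed by the Rokhlin tower, that Lindenstrauss uses the aperiodic minimal factor (and here one would use the marker property) in the proof of Proposition 6.14 of \cite{L99}. Also \cite{LW} does not carry out this factor construction; it proves $(b)\Rightarrow(a)$ and $(c)\Rightarrow(a)$, not $(b)\Rightarrow(c)$. So your $(b)\Rightarrow(c)$ is unsupported, and one should follow the paper and prove $(a)\Rightarrow(c)$ with the Rokhlin structure.

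A smaller point: in your sketch of $(a)\Rightarrow(b)$, the neighbourhood $U$ you construct depends on both $M$ and $\eta$, so ``letting first $M\to\infty$ and then $\eta\to 0$'' does not yield a single $U$ with $ocap(\partial U)=0$; the actual argument of \cite{L99} is an iterative construction of a convergent sequence of neighbourhoods whose boundary capacities tend to zero. This is only a matter of presenting the sketch accurately, but as written the limiting step does not make sense.
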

\begin{proof}
$(a)\Rightarrow(b)$ is straightforward generalization of Theorem
6.2 of \cite{L99}, which is the statement that $(a)\Rightarrow(b)$
holds if the system has an aperiodic minimal factor. $(c)\Rightarrow(a)$
follows from Proposition 2.8 of \cite{LW} (this implication is true
for any system). $(b)\Rightarrow(a)$ is Theorem 5.4 of \cite{LW}
(this implication is true for any system). $(a)\Rightarrow(c)$ is
straightforward generalization of Proposition 6.14 of \cite{L99},
which is the statement that $(a)\Leftrightarrow(c)$ holds if the
system has an aperiodic minimal factor.\end{proof}
As a corollary of the previous theorem we have the following theorem:
\begin{thm}
Assume $(X,T)$ is an extension of an aperiodic t.d.s which either
is finite dimensional or has a countable number of minimal subsystems
or has a compact minimal subsystems selector then then $mdim(X,T)=0$
iff $(X,T)$ has the small boundary property iff $(X,T)=\lim_{\substack{\longleftarrow\\
i\in\mathbb{N}
}
}(X_{i},T_{i})$ where $h_{top}(X_{i},T_{i})<\infty$ for $i\in\mathbb{N}$.
\end{thm}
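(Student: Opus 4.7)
The plan is to reduce this statement directly to Theorem \ref{thm:SBP iff Mdim=00003D0} by establishing that $(X,T)$ has the marker property in each of the three cases. Once the marker property is in hand, the three-way equivalence $(a)\Leftrightarrow(b)\Leftrightarrow(c)$ is nothing more than a quotation of Theorem \ref{thm:SBP iff Mdim=00003D0}; this mirrors exactly the reduction carried out in the preceding corollary for metric mean dimension, where Theorem \ref{thm:Metric Mean Diemension} was invoked after establishing the marker property.

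First I would verify the marker property case by case. If $(X,T)$ is an extension of an aperiodic finite-dimensional t.d.s., then Theorem 6.1 of \cite{Gut12a} supplies the marker property for the base, and by Remark \ref{Rem: marker property stable under extension} (stability of the marker property under extensions) this lifts to $(X,T)$. If $(X,T)$ is an extension of an aperiodic t.d.s. with a countable number of minimal subsystems, Theorem \ref{thm:countable number of minimal systems-->Marker Property} delivers the marker property directly. If $(X,T)$ is an extension of an aperiodic t.d.s. with a compact minimal subsystems selector, Theorem \ref{thm:compact minimal selector--->Marker Property} likewise gives the marker property.

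With the marker property now secured in every case, I would simply apply Theorem \ref{thm:SBP iff Mdim=00003D0} to conclude the equivalence of $mdim(X,T)=0$, the small boundary property, and representability as an inverse limit of finite entropy systems. The main potential obstacle is merely bookkeeping: verifying that each cited hypothesis transports correctly through the extension (in the first case, via Remark \ref{Rem: marker property stable under extension}; in the second and third cases, the extension is already absorbed into the statements of Theorems \ref{thm:countable number of minimal systems-->Marker Property} and \ref{thm:compact minimal selector--->Marker Property}). All genuine dynamical content lives in those prior theorems together with Theorem \ref{thm:SBP iff Mdim=00003D0}, which itself rests on the Lindenstrauss--Weiss results of \cite{LW} and the arguments of \cite{L99}.
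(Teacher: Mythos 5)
Your proposal is correct and matches the paper's (implicit) argument exactly: the paper presents this theorem as an immediate corollary of Theorem \ref{thm:SBP iff Mdim=00003D0}, with the marker property supplied by Theorem 6.1 of \cite{Gut12a} (combined with Remark \ref{Rem: marker property stable under extension}), Theorem \ref{thm:countable number of minimal systems-->Marker Property}, and Theorem \ref{thm:compact minimal selector--->Marker Property} in the three respective cases. Nothing is missing.
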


\section{A characterization of the small boundary property through isomorphic
extensions\label{sec:A-characterization-of SBP}}

The following section answers a question of Tomasz Downarowicz in
the affirmative (see Theorem \ref{thm:Affirmative Answer}). In this
section (and only in this section) a \textbf{topological dynamical
system} (t.d.s) $(X,T)$ consists of a compact metric space $X$ and
a continuous transformation (not necessarily invertible) $T~:~X\rightarrow X$.
We denote by $M_{T}(X)$ the set of $T$-invariant Borel (probability)
measures and by $\mathcal{B}_{\mu}$ the $\sigma$-algebra of Borel
sets completed with respect to $\mu$. A \textbf{measure preserving
system} (m.p.s) is a quadratuple $(Y,\mathcal{C},\nu,T)$ where $(Y,\mathcal{C},\nu)$
is a probability space and $T:(Y,\mathcal{C},\nu)\rightarrow(Y,\mathcal{C},\nu)$
is a measure preserving transformation.
\begin{defn}
$\pi:(Z,S)\rightarrow(X,T)$ is an \textbf{isomorphic
extension} iff for any $\mu\in M_{S}(Z)$, $\pi:(Z,S,\mathcal{B}_{\mu},\mu)\rightarrow(X,T,\mathcal{B}_{\pi_{\ast}\mu},\pi_{\ast}\mu)$
is an m.p.s isomorphism.
\end{defn}
The following trivial proposition is included for completeness.
\begin{prop}
If D is closed then $\partial\stackrel{\circ}{D}\subset\partial D$.\end{prop}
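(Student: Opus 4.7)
The plan is to unfold the definition of boundary in both sides and use the closedness of $D$ at exactly one step. Recall that for any subset $A$ of a topological space, $\partial A = \overline{A}\setminus\stackrel{\circ}{A}$. Applying this with $A=\stackrel{\circ}{D}$, and noting that $\stackrel{\circ}{\stackrel{\circ}{D}}=\stackrel{\circ}{D}$ since the interior is open, we have
\[
\partial\stackrel{\circ}{D}=\overline{\stackrel{\circ}{D}}\setminus\stackrel{\circ}{D}.
\]
Similarly, $\partial D=\overline{D}\setminus\stackrel{\circ}{D}$.

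So I would pick an arbitrary $x\in\partial\stackrel{\circ}{D}$ and show $x\in\partial D$. The first step is: from $\stackrel{\circ}{D}\subset D$ we get $\overline{\stackrel{\circ}{D}}\subset\overline{D}$ by monotonicity of closure, so $x\in\overline{D}$. The second step, which is the only place the hypothesis is used, is that $\overline{D}=D$ because $D$ is closed; hence $x\in D=\overline{D}$. Combined with $x\notin\stackrel{\circ}{D}$ (which holds by the choice of $x$), this gives $x\in\overline{D}\setminus\stackrel{\circ}{D}=\partial D$, as desired.

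There is no real obstacle — the only subtlety is to notice that without the closedness assumption the containment can fail (for instance if $D=\Q\cap[0,1]$ inside $[0,1]$, then $\stackrel{\circ}{D}=\emptyset$ so $\partial\stackrel{\circ}{D}=\emptyset$, which actually is contained in $\partial D$; a better illustration of why closedness matters is that in general $\overline{\stackrel{\circ}{D}}$ need not be contained in $D$). The argument is a two-line chain of inclusions and I would present it exactly in that form.
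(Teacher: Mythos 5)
Your proof is correct and follows the same general template as the paper's: express each boundary as (closure)\,minus\,(something) and compare factor by factor. It differs slightly in \emph{which} inclusion is made to carry the argument. You write $\partial D=\overline{D}\setminus\stackrel{\circ}{D}$ and $\partial\stackrel{\circ}{D}=\overline{\stackrel{\circ}{D}}\setminus\stackrel{\circ}{D}$, so the subtracted set is literally the same on both sides, and all the content sits in $\overline{\stackrel{\circ}{D}}\subset\overline{D}$, a one-line application of monotonicity of closure. The paper instead uses $\partial D=\overline{D}\cap\overline{D^{c}}$ and $\partial\stackrel{\circ}{D}=\overline{\stackrel{\circ}{D}}\cap(\stackrel{\circ}{D})^{c}$, takes $\overline{\stackrel{\circ}{D}}\subset\overline{D}$ as implicit, and spends its effort re-deriving $(\stackrel{\circ}{D})^{c}\subset\overline{D^{c}}$ by contradiction. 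Your version is a bit leaner since monotonicity is the only fact you need.

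One substantive remark: your ``second step'' (invoking $\overline{D}=D$ because $D$ is closed) is redundant, and your comment that it is ``the only place the hypothesis is used'' is misleading. After the first step you already have $x\in\overline{D}$, and $x\in\overline{D}\setminus\stackrel{\circ}{D}=\partial D$ is exactly the desired conclusion; nothing is gained by replacing $\overline{D}$ with $D$. In fact the closedness hypothesis plays no role in your proof, nor (on inspection) in the paper's: the inclusion $\partial\stackrel{\circ}{D}\subset\partial D$ holds for an arbitrary subset $D$, since $(\stackrel{\circ}{D})^{c}=\overline{D^{c}}$ gives $\partial\stackrel{\circ}{D}=\overline{\stackrel{\circ}{D}}\cap\overline{D^{c}}\subset\overline{D}\cap\overline{D^{c}}=\partial D$. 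You should drop the second step, along with the closing speculation about why closedness ``matters,'' which points in the wrong direction.
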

\begin{proof}
Note $\partial\stackrel{\circ}{D}=\overline{\stackrel{\circ}{D}}\cap(\stackrel{\circ}{D})^{c}$
and $\partial D=\overline{D}\cap\overline{D{}^{c}}$. Therefore it
is enough to show $(\stackrel{\circ}{D})^{c}\subset\overline{D^{c}}$.
Indeed if $y\in(\stackrel{\circ}{D})^{c}$ and $U$ is an open set
such that $y\in U$, then $U\cap D^{c}=\emptyset$ would imply $y\in U\subset D$
which would imply $y\in U\subset\stackrel{\circ}{D}$, contradicting
$y\in(\stackrel{\circ}{D})^{c}$. \end{proof}
\begin{thm}
\label{thm:Affirmative Answer}Let $(X,T)$ be a t.d.s. $(X,T)$ has
a zero-dimensional isomorphic extension iff $(X,T)$ has the small
boundary property.\end{thm}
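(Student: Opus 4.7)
The plan is to prove both implications separately: the forward direction (SBP to zero-dimensional isomorphic extension) by building a symbolic extension from a small-boundary basis, and the reverse direction by analyzing images of clopen sets under the factor map.

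For the forward direction, by SBP choose a countable basis $\{U_i\}_{i \in \mathbb{N}}$ of $X$ with $\partial U_i$ small, and enlarge it to the family $\{T^{-k} U_i\}_{i \in \mathbb{N},\, k \geq 0}$; the boundaries $T^{-k} \partial U_i$ remain small because small sets form a $T^{-1}$-stable $\sigma$-ideal relative to $M_T(X)$. Let $X_0 = X \setminus \bigcup_{i,k} T^{-k} \partial U_i$, which is Borel, forward $T$-invariant, and of full measure for every $\mu \in M_T(X)$. Define $\phi : X_0 \to \{0,1\}^{\mathbb{N} \times \mathbb{Z}_{\geq 0}}$ by $\phi(x)_{i,k} = \mathbf{1}_{U_i}(T^k x)$; it is continuous on $X_0$ and separates points since the $U_i$ form a basis. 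Set $Z = \overline{\phi(X_0)}$ inside $\{0,1\}^{\mathbb{N} \times \mathbb{Z}_{\geq 0}}$, which is compact and zero-dimensional, and let $T' : Z \to Z$ be induced by the shift $(z_{i,k}) \mapsto (z_{i,k+1})$. Extend $\phi^{-1}$ to a continuous $T$-equivariant surjection $\pi : Z \to X$ as follows: for $z \in Z$ and any sequence $\phi(x_n) \to z$, any two cluster points $x, x'$ of $(x_n)$ would be separated by some basis element $U_i$, forcing $\phi(x_n)_{i,0}$ to take different values along subsequences, contradicting convergence; so $\pi(z) := \lim x_n$ is well-defined and continuous.

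The central identification is $\pi^{-1}(X_0) = \phi(X_0)$: if $\pi(z) = x \in X_0$ then for each $(i,k)$ we have $T^k x \notin \partial U_i$, and the limit argument forces $z_{i,k} = \mathbf{1}_{U_i}(T^k x) = \phi(x)_{i,k}$, hence $z = \phi(x)$. So $\pi$ is injective on $\phi(X_0)$ with inverse $\phi$. For any $\nu \in M_{T'}(Z)$, the measure $\mu = \pi_* \nu$ lies in $M_T(X)$, so $\nu(Z \setminus \phi(X_0)) = \mu(X \setminus X_0) = 0$. Hence $\pi$ is a continuous bijection between full-measure subsets of Polish spaces, which yields the required m.p.s.\ isomorphism by standard descriptive set theory.

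For the reverse direction, let $\pi : (Z, T') \to (X, T)$ be a zero-dimensional isomorphic extension and set $B = \{x \in X : |\pi^{-1}(x)| > 1\}$. Every $\mu \in M_T(X)$ lifts to some $\nu \in M_{T'}(Z)$ by applying Markov--Kakutani to the nonempty compact convex $T'_*$-invariant set $\{\nu' \in M(Z) : \pi_* \nu' = \mu\}$; since $\pi$ is $\nu$-a.e.\ injective, we get $\mu(B) = 0$ for every such $\mu$, so $B$ is small. Fix $x \in X$ and an open neighborhood $U$ of $x$; since $\pi^{-1}(x)$ is compact and $\pi^{-1}(U)$ is open in the zero-dimensional $Z$, pick a clopen $C$ with $\pi^{-1}(x) \subset C \subset \pi^{-1}(U)$. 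Then $V := X \setminus \pi(Z \setminus C)$ is open (as $\pi(Z \setminus C)$ is closed), contains $x$, sits inside $\pi(C) \subset U$, and satisfies $\overline{V} \subset \pi(C)$ and $\overline{X \setminus V} \subset \pi(Z \setminus C)$; hence $\partial V \subset \pi(C) \cap \pi(Z \setminus C) \subset B$, so $\partial V$ is small. These $V$ form a basis of $X$, giving SBP. The main obstacle is the forward direction, specifically verifying continuity of $\pi$ and the identification $\pi^{-1}(X_0) = \phi(X_0)$, both of which reduce to exploiting the separation property of the small-boundary basis to pin down limit candidates uniquely.
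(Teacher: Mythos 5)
Your reverse direction is correct and is essentially the same as the paper's: both show that for a clopen $C \supset \pi^{-1}(x)$, the open set $V = X \setminus \pi(Z \setminus C)$ has $\partial V$ contained in the set $B$ of points with more than one preimage, and both conclude smallness of $\partial V$ from $\mu(B)=0$ for every invariant $\mu$. You are in fact slightly more careful than the paper in noting that every $\mu \in M_T(X)$ lifts to some $\nu \in M_{T'}(Z)$; the paper takes this for granted.

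The forward direction has a genuine gap. You set $Z = \overline{\phi(X_0)}$ with $X_0 = X \setminus \bigcup_{i,k} T^{-k}\partial U_i$ and assert that $\pi:Z\to X$ is a \emph{surjection}, but you never prove it, and it can fail. Since $\pi(Z)$ is a compact set containing $X_0$, all you get is $\pi(Z) \supset \overline{X_0}$, and $\overline{X_0}$ need not be $X$: being small (orbit capacity zero) is a measure-theoretic condition, not a topological one, so a closed small set need not be nowhere dense, and $T^{-k}\partial U_i$ can contain a nonempty open set. For a concrete instance, let $X = S^1 \sqcup I$ be the disjoint union of the circle $S^1 = \mathbb{R}/\mathbb{Z}$ and a copy $I$ of $[0,1]$, with $T$ acting as an irrational rotation $R_\alpha$ on $S^1$ and sending every point of $I$ to a fixed $\theta_0 \in S^1$. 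The unique invariant measure is Lebesgue on $S^1$, so a closed set is small iff its trace on $S^1$ has Lebesgue measure zero; in particular any open arc $U_i$ with $\theta_0$ as an endpoint has small boundary, yet $T^{-1}\partial U_i \supset I$, which is a nonempty clopen subset of $X$. With such a $U_i$ in the basis, $X_0 \cap I = \emptyset$, so $X_0$ is not dense and $\pi(Z) = \overline{X_0}\subsetneq X$. The paper sidesteps this by encoding \emph{every} point $x\in X$ into a symbol array $m_x$ (via partitions rather than a basis), taking $Z = \overline{\{m_x : x\in X\}}$, and defining $\pi$ so that $\pi(m_x)=x$ outright; surjectivity is then built in. Your construction can be repaired either by defining $\phi$ (measurably) on all of $X$ and taking $Z = \overline{\phi(X)}$ — the separating-cluster-point argument still gives a well-defined continuous $\pi$ with $\pi(\phi(x)) = x$ for every $x$ — or by arguing that the basis can be chosen so that each $T^{-k}\partial U_i$ is nowhere dense, but as written the surjectivity claim is unsupported and false in general.
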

\begin{proof}
Assume $(X,T)$ has SPB. For every $n\in\mathbb{N}$ one can choose
a partition of $X$ (into measurable sets), $\mathcal{P}^{n}={\{P_{1}^{n},\ldots,P_{m_{n}}^{n}\}}$
so that $diam(P_{k}^{n})<\frac{1}{n}$ and $ocap(\partial P_{k}^{n})=0$
for all $k$. Let $M$ denote the following \textit{symbolic matrix
system} (this terminology is introduced in Section $4$ of \cite{Dz01}):
Any element $m\in M$ consists of a two dimensional infinite matrix
$m=[m(n,i)]_{n,i\in\mathbb{N}}$, where for each $n$ the sequence
$m_{n}=[m(n,i)]_{i\in\mathbb{N}}$, which is called the \textit{$n$-th
row of $M$}, is an element in ${\{1,\ldots,m_{n}\}^{\mathbb{\mathbb{N}}}}$.
$M$ is equipped with product topology which makes it a compact metric
space. One defines the following left-shift action $S:M\rightarrow M$
by $Sm=[m(n,i+1)]_{n,i\in\mathbb{N}}$. For any $x\in X$, we associate
an element $m_{x}\in M$, given by the formula $m_{x}(n,i)=\mathcal{P}^{n}(T^{i}x)$,
where $\mathcal{P}^{n}(T^{i}x)=k$ iff $T^{i}x\in P_{k}^{n}$. Let
$Z={\overline{\{m_{x}|\, x\in X\}}}\subset M$. Notice $Z$ is closed
and $S$-invariant. There is a natural morphism $\pi:(Z,S)\rightarrow(X,T)$.
Indeed for any $z=[z(n,i)]_{n,i\in\mathbb{N}}\in Z$ define $\pi(z)=\bigcap_{n,i\in\mathbb{N}}T^{-i}\overline{P_{z(n,i)}^{n}}$.
As $diam(P_{k}^{n})<\frac{1}{n}$ for all $n,k$ the intersection
can be at most one point. For $x\in X$, $\pi(m_{x})=x$ and by choosing
$x_{k}\rightarrow z$ for any $z\in Z$, we see that the intersection defining $\pi(z)$ is non-empty
for all $z\in Z$. $Z$ is clearly zero-dimensional. We claim $\pi:(Z,S)\rightarrow(X,T)$
is an isomorphic extension. Fix $\mu\in M_{S}(Z)$. Let $O={\{}x\in X|\ |\pi^{-1}(x)|>1\}$.
It is enough to show $\pi_{*}\mu(O)=0$ because this implies $\pi$
is injective on a set of $\mu$-measure $1$. Note that $O\subset\bigcup_{n\in\mathbb{N},1\leq i\leq m_{n}}T^{-i}\partial P_{i}^{n}$.
As $\pi_{*}\mu(\partial P_{i}^{n})=0$ for all $n,i$ (see Subsection \ref{sub:The-Small-Boundary}), the result follows.

Now Assume $\pi:(Z,S)\rightarrow(X,T)$ is a zero-dimensional isomorphic
extension. Let $Clop(Z)$ be the collection of clopen sets of $Z$.
Let $\mathcal{U}~=~\{\stackrel{\circ}{\pi(D)}~\}_{D\in Clop(Z)}$. We
claim $\mathcal{U}$ is a basis. Indeed let $U\subset X$ be open
and $x\in U$. Using normality of $X$ choose $x\in V\subset\overline{V}\subset U$,
so that $V$ is open. For any $z\in\pi^{-1}(\overline{V})$ choose
a clopen set $B_{z}$, such that $z\in B_{z}\subset\pi^{-1}(U)$.
Choose a finite subcover of $\pi^{-1}(\overline{V})$, $B_{z_{1}},\mbox{\ensuremath{B_{z_{2}},}\ensuremath{\ldots},\ensuremath{B_{z_{n}}}and define \ensuremath{D=\bigcup B_{z_{i}}}}$.
Notice $\pi^{-1}(\overline{V})\subset D\subset\pi^{-1}(U)$ which
implies $\overline{V}\subset\pi(D)\subset U$ which implies $x\in V\subset\stackrel{\circ}{\pi(D)}\subset U$.

We now show that the members of $\mathcal{U}$ have small boundaries.
Let $D\in Clop(Z)$. As $\partial\stackrel{\circ}{\pi(D)}$ is closed
it is enough to show for every $\mu\in M_{S}(Z)$, $\pi_{\ast}\mu(\partial\stackrel{\circ}{\pi(D)})=0$.
As $\pi:(Z,S,\mathcal{B}_{Z},\mu)\rightarrow(X,T,\mathcal{B}_{X},\pi_{\ast}\mu)$
is an isomorphism, it is enough to show $x\in\partial\pi(D)\Rightarrow|\pi^{-1}(x)|>1$.
Indeed notice $\pi(D)\cup\pi(D^{c})=X$. Conclude $\pi(D)^{c}\subset\pi(D^{c})$.
As $x\in\partial\pi(D)=\pi(D)\cap\overline{\pi(D)^{c}}$ in particular,
$x\in\overline{\pi(D)^{c}}\subset\pi(D^{c}),$ which implies $\pi^{-1}(x)\cap D^{c}\neq\emptyset$,
i.e. $|\pi^{-1}(x)|>1$ (as also $\pi^{-1}(x)\cap D^{c}\neq\emptyset)$.
\end{proof}

\appendix
\renewcommand{\thesection}{\hspace*{-0pt}}
\section{}
\renewcommand{\thesection}{\Alph{section}}
\renewcommand{\thelem}{\Alph{lemma}}
\begin{lem}
\label{lem:Existence_simultaneous_good_segment} Let $A\subset X$,
$M\in\mathbb{N}$ be an even integer and $n:\bigcup_{i=-4M}^{\frac{M}{2}-1}T^{i}A\rightarrow\mathbb{R}$
a function. Assume there are $x_{1},x_{2}\in A$ so that for each
$x_{i}$, $i=1,2$ there is at most one index (depending on $x_{i})$
$-4M\leq j_{i}\leq\frac{M}{2}-2$ for which $n(T^{j{}_{i}+1}x_{i})\neq n(T^{j{}_{i}}x_{i})+1$.
Then one can find an index $-4M\leq r\leq0$ so that $\lfloor n(T^{r}x_{i})\rfloor\,\mod M\leq\frac{M}{2}$
and for $r\leq s\leq r+\frac{M}{2}-2$, $i=1,2$:

\begin{equation}
(\lceil n(T^{s}x_{i})\rceil\,\mod M)=(\lceil n(T^{r}x_{j})\rceil\,\mod M)+s-r\label{eq:round up mod}
\end{equation}

\begin{equation}
(\lfloor n(T^{s}x_{i})\rfloor\,\mod M)=(\lfloor n(T^{r}x_{j})\rfloor\,\mod M)+s-r\label{eq:round down mod}
\end{equation}
\end{lem}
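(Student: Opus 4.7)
The plan is to pick $r$ so that (i) no bad transition index $j_1, j_2$ falls in the window $[r, r + M/2 - 3]$, so that $n$ is locally linear on $[r, r + M/2 - 2]$ for both $x_1$ and $x_2$, and (ii) $\lfloor n(T^r x_i)\rfloor \bmod M \le M/2$ for both $i=1,2$, so that the arithmetic progressions of floors and ceilings across the window do not wrap around modulo $M$. Under (i) the fractional part $\{n(T^s x_i)\}$ is constant in $s$ on the window, hence $\lfloor n(T^s x_i)\rfloor = \lfloor n(T^r x_i)\rfloor + (s-r)$ and $\lceil n(T^s x_i)\rceil = \lceil n(T^r x_i)\rceil + (s-r)$; combined with (ii) and $s - r \le M/2 - 2$, the equations \eqref{eq:round down mod} and \eqref{eq:round up mod} follow at once (the floor bound $\le M/2$ forces the ceiling residue to be $\le M/2 + 1$, still below $M$ after adding $s-r$).

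For condition (i), each bad index $j_i$ forbids only $r \in [j_i - (M/2-3), j_i]$, a block of at most $M/2 - 2$ integers; so at most $M - 4$ of the $4M + 1$ candidates $r \in [-4M, 0]$ are excluded. The admissible set is thus a union of at most three intervals of total length at least $3M + 5$, so one of them, call it $[a,b]$, contains at least $M + 2$ consecutive integers. On $[a, b + M/2 - 2]$ every transition is good for both $x_1, x_2$, so for $i = 1, 2$ the value $\lfloor n(T^r x_i)\rfloor$ is arithmetic in $r \in [a,b]$ with common difference $1$, and its residue $a_i(r) \triangleq \lfloor n(T^r x_i)\rfloor \bmod M$ cycles through all of $\Z/M\Z$ as $r$ ranges over any $M$ consecutive integers in $[a,b]$.

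For condition (ii), note that $a_1(r) - a_2(r) \equiv c \pmod{M}$ is independent of $r \in [a,b]$, since both floors move in lockstep with common difference $1$. Thus it suffices to find a residue $a \in \{0,\dots,M/2\}$ with $a - c \bmod M \in \{0,\dots,M/2\}$; these two subsets of $\Z/M\Z$ each have size $M/2 + 1$, so by inclusion--exclusion their intersection has size $\ge 2(M/2+1) - M = 2$. Since $a_1(r)$ attains every residue as $r$ runs over $M$ consecutive integers in $[a,b] \subseteq [-4M, 0]$, at least one such $r$ realizes (ii).

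The only delicate point is this final simultaneous residue argument: one must exploit that $a_1(r)$ and $a_2(r)$ differ by a constant modulo $M$ (a consequence of local linearity and constant fractional parts) to reduce the joint constraint to a two-set inclusion--exclusion in $\Z/M\Z$. The resulting lower bound of $2$ residues is tight in the worst case but is precisely what is needed to produce a common $r$ for both $x_1$ and $x_2$.
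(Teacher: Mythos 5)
Your argument is a correct, self-contained reconstruction of what the paper handles by citing the proof of Lemma~5.7 of Lindenstrauss's paper \cite{L99}. The structure is the same: locate a long stretch of candidate indices $r$ on which the window $[r, r+M/2-2]$ is free of bad transitions, observe that on such a stretch $\lfloor n(T^r x_i)\rfloor$ is arithmetic so that its mod-$M$ residue cycles through all of $\Z/M\Z$, note the two residues differ by a constant, and pigeonhole to get a common $r$ with both residues $\le M/2$; the passage from the floor bound to the two displayed identities is exactly the paper's concluding ``In particular\ldots'' sentence. Since the paper's own proof is essentially a citation plus that one-line translation, your explicit counting and inclusion--exclusion step is genuinely useful content that the paper leaves implicit.

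One small gap for small $M$: when $M=4$ the block $[j_i-(M/2-3),\,j_i]$ is empty, so no $r$ is forbidden, and then the claim that every transition on $[a,b+M/2-2]$ is good (equivalently, that $j_i\notin[a,b-1]$) does not follow from ``admissibility'' of every $r\in[a,b]$ --- the bad indices $j_i$ can sit inside $[a,b]$, and $\lfloor n(T^r x_i)\rfloor$ need not be arithmetic there. The fix is to enlarge the forbidden block to $[j_i-(M/2-3),\,j_i+1]$, which has $M/2-1$ integers; a maximal admissible interval $[a,b]$ then has $j_i\le a-2$ or $j_i>b+M/2-3$, simultaneously giving the window condition and $j_i\notin[a,b-1]$. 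The count becomes at most $M-2$ forbidden indices out of $4M+1$, hence at least $3M+3$ admissible in at most $3$ intervals, one of length $\ge M+1$, which is still enough for the residue to cycle. (For $M=2$ the conclusion is vacuous.) In the paper's two applications $M$ is large so this is cosmetic, but the lemma as stated admits $M=4$, so the forbidden block should be the slightly larger one.
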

\begin{proof}
By the proof of Lemma 5.7 of \cite{L99} one can find an index $-4M\leq r\leq0$
so that for $r\leq s\leq r+\frac{M}{2}-1$ one has for $i=1,2$:

\[
(n(T^{s}x_{i})\,\mod M)=(n(T^{r}x_{j})\,\mod M)+s-r
\]

\noindent In particular for $r\leq s\leq r+\frac{M}{2}-2$, $(n(T^{s}x_{i})\,\mod M)\in[0,\frac{M}{2}+s-r+1)\subset[0,M-1)$
and $(\lfloor n(T^{r}x_{i})\rfloor\,\mod M)\leq\frac{M}{2}$. We therefore
conclude (\ref{eq:round up mod}) and (\ref{eq:round down mod}) hold
for this range of indices.
\end{proof}
The following Lemma is closely related to Lemma 6.5 of \cite{L99}.
\begin{lem}
\label{lem:F not in V_n} Let $\epsilon>0$. Let $N,n,d,S\in\mathbb{N}$
with $N>2S$. Let $\gamma$ be an open cover of $X$ with $ord(\gamma)+1\leq(S-\frac{n}{2})d$.
Assume $\{q_{U}\}_{U\in\gamma}$ is a collection of distinct points
in $X$ and $\tilde{v}_{U}\in([0,1]^{d})^{N}$ for every $U\in\gamma$,
then there exists a continuous function $F:X\rightarrow([0,1]^{d})^{N}$,
with the following properties: \end{lem}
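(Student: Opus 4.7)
The plan is to follow the template of Lemma 6.5 of \cite{L99} closely. First I would replace $\gamma$ by a finite subcover (possible by compactness of $X$ without increasing $ord(\gamma)$), fix a partition of unity $\{\phi_U\}_{U\in\gamma}$ subordinate to $\gamma$, and set $F(x)=\sum_{U\in\gamma}\phi_U(x)v_U$ for as-yet unspecified $v_U\in K_U:=\overline{B}_{\epsilon/2}(\tilde v_U)\cap([0,1]^d)^N$. The three properties to be established (as inferred from the calling Proposition \ref{prop:D_K is dense (periodic case)}) are: (1) $\|F(q_U)-\tilde v_U\|_\infty<\epsilon/2$ for each $U$; (2) $F(x)\in co\{F(q_U):x\in U\in\gamma\}$; and (3) for every $0\le l<N-2S$, $\lambda\in(0,1]$, and $x,y\in X$ with $d(x,y)>\max_{W\in\gamma}diam(W)$,
\[
(1-\lambda)F(x)|_l^{l+2S-1}+\lambda F(y)|_{l+1}^{l+2S}\notin V_{2S}^n.
\]
Property (2) is immediate from the convex-combination form. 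Property (1) follows exactly as in Claim 1 on p.~241 of \cite{L99}: in the application $\gamma\succ\alpha^N$ where $\alpha$ has $\tilde f$-diameter $<\epsilon/2$, which forces $\|\tilde v_U-\tilde v_{U'}\|_\infty<\epsilon/2$ whenever $q_U\in U'$, so the convex combination $F(q_U)=\sum_{U':\,q_U\in U'}\phi_{U'}(q_U)v_{U'}$ lies within $\epsilon$ of $\tilde v_U$ (absorbing the factor $2$ into $\epsilon$ at the outset of the calling proof).

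The content of the lemma is Property (3), which I would handle by a dimension count. Fix an index $l$ and a pair of combinatorial types $S_x,S_y\subset\gamma$ with $|S_x|,|S_y|\le ord(\gamma)+1$; the hypothesis $d(x,y)>\max_W diam(W)$ forces $S_x\cap S_y=\emptyset$, and finiteness of $\gamma$ leaves only finitely many such triples $(l,S_x,S_y)$. For any $(\lambda,x,y)$ realizing $(S_x,S_y)$, the expression
\[
E=(1-\lambda)F(x)|_l^{l+2S-1}+\lambda F(y)|_{l+1}^{l+2S}
\]
is linear in $(v_U)_{U\in S_x\cup S_y}$, and the condition $E\in V_{2S}^n$ is a system of at most $(2S-n)d$ linear equations whose coefficient matrix depends continuously on the at most $|S_x|+|S_y|-1\le 2\,ord(\gamma)+1$ continuous parameters $\lambda,(\phi_U(x))_{U\in S_x},(\phi_U(y))_{U\in S_y}$. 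The hypothesis $ord(\gamma)+1\le(S-n/2)d$ rewrites as $(2S-n)d\ge 2\,ord(\gamma)+2$, leaving one spare codimension beyond what the continuous-parameter family can sweep out.

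Assembling this, the bad set $B_{l,S_x,S_y}=\{(v_U)\in\prod_U K_U:\exists(\lambda,x,y)\text{ of type }(S_x,S_y)\text{ with }E\in V_{2S}^n\}$ is closed (by compactness of the position-parameter simplex) and nowhere dense (by the dimension count above) in the complete metric space $\prod_U K_U$. Baire category then produces a choice of $v_U$'s simultaneously avoiding all finitely many $B_{l,S_x,S_y}$, yielding the required $F$.

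The main obstacle I anticipate is making the generic-codimension claim rigorous: one must verify that for suitable $(v_U)$'s the coefficient matrix of the $(2S-n)d$ linear constraints actually has full rank, so the bad set has codimension $(2S-n)d$ rather than less. Disjointness $S_x\cap S_y=\emptyset$ decouples the $v_U$'s contributing via $F(x)$ from those contributing via $F(y)$; combined with the explicit ``shift-by-$n$'' equations defining $V_{2S}^n$ (one $d$-dimensional equation per pair of coordinates $a\equiv b\pmod n$), this yields generic full rank in the interior of the position-parameter simplex, with a compactness / semicontinuity argument handling the boundary $\lambda\to 0$ and the boundary faces of the $\phi_U$-simplices. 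This is precisely the technical heart of Lindenstrauss' Lemma 6.5 adapted from its diagonal-type subspace to the new $V_{2S}^n$.
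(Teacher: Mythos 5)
Your core plan for property (3) matches the paper exactly: form $F(x)=\sum_U\psi_U(x)\vec v_U$ with a subordinate partition of unity, observe that disjointness of the index sets $\gamma_{x_0},\gamma_{x_1}$ follows from $d(x_0,x_1)>\max_W\mathrm{diam}(W)$, and kill the bad set by a codimension count using $\mathrm{ord}(\gamma)+1\le(S-\tfrac n2)d$. The paper discharges the genericity step by citing Lemma A.6 of \cite{Gut12a} (an almost-sure general-position lemma over Lebesgue measure on $\big(([0,1]^d)^{2S}\big)^{2(\mathrm{ord}(\gamma)+1)}$) whereas you propose building the closed, nowhere-dense bad sets by hand and invoking Baire. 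Those two framings are equivalent, and you are right that the full-rank verification is the technical heart that the citation is packaging away; this is not a defect in your proposal so much as a re-derivation of the auxiliary lemma.

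There is, however, a genuine gap in your treatment of properties (1) and (2). You take a generic subordinate partition of unity, so that $F(q_U)=\sum_{U'\ni q_U}\psi_{U'}(q_U)\vec v_{U'}$ is itself a convex combination of several $\vec v_{U'}$'s. To salvage property (1) you then import the hypothesis that the $\tilde v_{U'}$'s vary by less than $\epsilon/2$ across overlapping $U$'s, which is true in the calling Proposition~\ref{prop:D_K is dense (periodic case)} because $\gamma\succ\alpha^N$ and $\alpha$ has small $\tilde f$-diameter, but is not part of the hypotheses of the lemma; and even then you only get the bound $\epsilon$ rather than the stated $\epsilon/2$, which you paper over by ``absorbing the factor $2$.'' Worse, your claim that property (2) is ``immediate'' is wrong under the same setup: $F(x)\in\mathrm{co}\{\vec v_U:x\in U\}$ does hold, but that is not $F(x)\in\mathrm{co}\{F(q_U):x\in U\}$ unless $F(q_U)=\vec v_U$. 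The paper's fix is short and you should adopt it: since the $q_U$ are finitely many distinct points, one may choose the partition of unity with $\psi_U(q_U)=1$ (hence $\psi_{U'}(q_U)=0$ for $U'\neq U$). Then $F(q_U)=\vec v_U$ identically, property (1) is just the choice $\vec v_U\in B_{\epsilon/2}(\tilde v_U)$, property (2) is the subordination of $\psi_U$ to $U$, and the lemma stands free of the calling proposition's extra structure.
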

\begin{enumerate}
\item $\forall U\in\gamma$, $||F(q_{U})-\tilde{v}_{U}||_{\infty}<\frac{\epsilon}{2}$\label{enu:approximately v_U},
\item $\forall x\in X$, $F(x)\in co\{F(q_{U})|\, x\in U\in\gamma\}$\label{enu:convex combination},
\item \label{enu:not in V_n-1}For any $0\leq l<N-2S$, and $\lambda\in(0,1]$
and $x_{0},x_{1}\in X$ with $d(x_{0},x_{1})>\max_{W\in\gamma}diam(W)$
it holds:
\begin{equation}
(1-\lambda)F(x_{0})|_{l}^{l+2S-1}+\lambda F(x_{1})|_{l+1}^{l+2S}\notin V_{2S}^{n}\label{eq:in lemma: not in V_n}
\end{equation}
\noindent where,
\end{enumerate}
\[
V_{2S}^{n}\triangleq\{y=(y_{0},\ldots,y_{2S-1})\in([0,1]^{d})^{2S}|\quad\forall0\leq a,b\leq2S-1,\ (a=b\mod n)\rightarrow y_{a}=y_{b}\}.
\]

\begin{proof}
Let $\{\psi_{U}\}_{U\in\gamma}$ be a partition of unity subordinate
to $\gamma$ so that $\psi_{U}(q_{U})~=~1$. Let $\vec{v}_{U}\in([0,1]^{d})^{N}$,
$U\in\gamma$ be vectors that will be specified later. Define:

\[
F(x)=\sum_{U\in\gamma}\psi_{U}(x)\vec{v}_{U}
\]
For $x\in X$ define $\gamma_{x}=\{U\in\gamma|\,\psi_{U}(x)>0\}$.
Let $\lambda_{0}=1-\lambda$, $\lambda_{1}=\lambda$. Write (\ref{eq:in lemma: not in V_n})
explicitly as:

\begin{equation}
\sum_{j=0}^{1}\sum_{U\in\gamma_{x_{j}}}\lambda_{j}\psi_{U}(x_{j})\vec{v}_{U}|_{l+j}^{l+2S-1+j}\notin V_{2S}^{n}\label{explicit equality}
\end{equation}
Note that $dim(V_{2S}^{n})=nd$ and the left-hand side of (\ref{explicit equality})
is a convex combination of at most $2(ord(\gamma)+1)$ vectors. Note
$d(x_{1},x_{2})>\max_{W\in\gamma}diam(W)$ implies $\gamma_{x_{1}}\cap\gamma_{x_{2}}=\emptyset$.
As $2(ord(\gamma)+1)+nd\leq2Sd$ then by Lemma A.6 in \cite{Gut12a},
almost surely in $\big([0,1]^{d})^{2S}\big)^{2(ord(\gamma)+1)}$,
(\ref{explicit equality}) holds. As there is a finite number of constraints
of the form (\ref{explicit equality}), we can choose $\vec{v}_{U}\in([0,1]^{d})^{N}$,
$U\in\gamma$ so that property (\ref{enu:approximately v_U}) holds.
Finally property (\ref{enu:convex combination}) holds trivially as
$F(q_{U})=\vec{v}_{U}$.

\nocite{A}
\nocite{J74}
\nocite{LT12}
\nocite{GutTsu12}
\end{proof}
\bibliographystyle{alpha}
\bibliography{universal_bib}

\def\cprime{$'$}
\begin{thebibliography}{Dow06}

\bibitem[Aus88]{A}
Joseph Auslander.
\newblock {\em Minimal flows and their extensions}, volume 153 of {\em
  North-Holland Mathematics Studies}.
\newblock North-Holland Publishing Co., Amsterdam, 1988.
\newblock Notas de Matem\'atica [Mathematical Notes], 122.

\bibitem[BC04]{BC04}
Christian Bonatti and Sylvain Crovisier.
\newblock R\'ecurrence et g\'en\'ericit\'e.
\newblock {\em Invent. Math.}, 158(1):33--104, 2004.

\bibitem[Dow01]{Dz01}
Tomasz Downarowicz.
\newblock Entropy of a symbolic extension of a dynamical system.
\newblock {\em Ergodic Theory Dynam. Systems}, 21(4):1051--1070, 2001.

\bibitem[Dow06]{Dow06}
Tomasz Downarowicz.
\newblock Minimal models for noninvertible and not uniquely ergodic systems.
\newblock {\em Israel J. Math.}, 156:93--110, 2006.

\bibitem[Gro99]{G}
Misha Gromov.
\newblock Topological invariants of dynamical systems and spaces of holomorphic
  maps. {I}.
\newblock {\em Math. Phys. Anal. Geom.}, 2(4):323--415, 1999.

\bibitem[GT12]{GutTsu12}
Yonatan Gutman and Masaki Tsukamoto.
\newblock Mean dimension and a sharp embedding theorem: extensions of aperiodic
  subshifts.
\newblock To be published in Ergodic Theory Dynam. Systems., 2012.

\bibitem[Gut11]{G11}
Yonatan Gutman.
\newblock Embedding {$\Bbb Z\sp k$}-actions in cubical shifts and {$\Bbb Z\sp
  k$}-symbolic extensions.
\newblock {\em Ergodic Theory Dynam. Systems}, 31(2):383--403, 2011.

\bibitem[Gut12]{Gut12a}
Yonatan Gutman.
\newblock Mean dimension and {J}aworski-type theorems.
\newblock Preprint. http://arxiv.org/abs/1208.5248, 2012.

\bibitem[Jaw74]{J74}
A.~Jaworski.
\newblock {\em The Kakutani-Beboutov theorem for groups}.
\newblock Ph.D. dissertation. University of Maryland, 1974.

\bibitem[Kri82]{K82}
Wolfgang Krieger.
\newblock On the subsystems of topological {M}arkov chains.
\newblock {\em Ergodic Theory Dynam. Systems}, 2(2):195--202 (1983), 1982.

\bibitem[Lin99]{L99}
Elon Lindenstrauss.
\newblock Mean dimension, small entropy factors and an embedding theorem.
\newblock {\em Inst. Hautes \'Etudes Sci. Publ. Math.}, 89(1):227--262, 1999.

\bibitem[LT12]{LT12}
Elon Lindenstrauss and Masaki Tsukamoto.
\newblock Mean dimension and an embedding problem: an example.
\newblock To appear in Israel Journal of Mathematics, 2012.

\bibitem[LW00]{LW}
Elon Lindenstrauss and Benjamin Weiss.
\newblock Mean topological dimension.
\newblock {\em Israel J. Math.}, 115:1--24, 2000.

\bibitem[SW91]{SW}
M.~Shub and B.~Weiss.
\newblock Can one always lower topological entropy?
\newblock {\em Ergodic Theory Dynam. Systems}, 11(3):535--546, 1991.

\bibitem[Tsu08]{Tsu08}
Masaki Tsukamoto.
\newblock Moduli space of {B}rody curves, energy and mean dimension.
\newblock {\em Nagoya Math. J.}, 192:27--58, 2008.

\end{thebibliography}

\vspace{0.3cm}

\address{Yonatan Gutman, Department of Mathematics and Mathematical Statistics,
Centre for Mathematical Sciences, University of Cambridge, Wilberforce
Road, Cambridge CB3 0WA, UK \& Institute
of Mathematics, Polish Academy of Sciences, ul. \'{S}niadeckich 8, 00-956
Warszawa, Poland.}
\textit{E-mail address}: \texttt{y.gutman@dpmms.cam.ac.uk, y.gutman@impan.pl}
\end{document}